\documentclass[11pt,a4paper]{article}
\usepackage{amsmath,amssymb,amsthm,mathrsfs,fullpage,lineno,bbm}
\usepackage{hyperref}

\newtheorem{theorem}{Theorem}
\newtheorem{lemma}{Lemma}
\newtheorem{proposition}{Proposition}
\newtheorem{corollary}{Corollary}
\theoremstyle{definition}
\newtheorem{definition}{Definition}
\newtheorem{conjecture}{Conjecture}
\newtheorem{remark}{Remark}

\newtheorem{fact}{Fact}
\newtheorem{claim}{Claim}
\newcommand{\E}{\mathbb{E}}
\newcommand{\Prb}{\mathbb{P}}

\newcommand{\1}{\mathbf{1}}
\newcommand{\Inf}{\mathrm{Inf}}

\newcommand{\Icorr}{\mathcal{I}}
\newcommand{\Cov}{\mathrm{Cov}}

\newcommand\tup[1]{\left\langle #1 \right\rangle}
\newcommand{\cube}{\{0,1\}}

\title{A Spectral Correlation Inequality for Increasing Boolean Functions}

\begin{document}
\author{Fan Chang\thanks{School of Statistics and Data Science, Nankai University, Tianjin, China; and Extremal Combinatorics and Probability Group, Institute for Basic Science, Daejeon, South Korea. Email: \texttt{1120230060@mail.nankai.edu.cn}. Supported by the National Natural Science Foundation of China (NSFC) under grant 124B2019 and by the Institute for Basic Science (IBS-R029-C4).}
}
\date{}

\maketitle

\begin{abstract}
Talagrand's correlation inequality provides a quantitative strengthening of the Harris--Kleitman inequality for increasing Boolean functions. Motivated by a Fourier-analytic conjecture of Friedgut, Kahn, Kalai, and Keller~\cite[Conjecture 5.8]{FKKK2018correlation}, we prove that
\[
\Cov(f,g)\ge
2\sum_{S\neq\emptyset}|S|\hat f(S)^2\hat g(S)^2
\]
holds for all increasing Boolean functions $f,g:\cube^n\to\cube$. The proof combines the reverse Bonami--Beckner inequality with Young's
convolution inequality. We also establish a sharp pointwise inequality: for every $n\ge1$, every $0\le\rho\le1$, and every $f,g:\cube^n\to[0,1]$, the optimal constant $c_{\rho,n}$ for which
\[
\tup{f,T_\rho g}\ge c_{\rho,n}\|f*g\|_2^2
\]
holds for all such $f,g$ is $1$ for $0\le\rho\le1/2$, $(2(1-\rho))^n$ for $1/2<\rho<1$, and $0$ for $\rho=1$. Integrating this pointwise inequality yields, for $n\ge1$, the slightly improved bound
\[
\Cov(f,g)\ge 4\cdot\frac{n+1}{2n}
\sum_{S\neq\emptyset}|S|\hat f(S)^2\hat g(S)^2.
\]
\end{abstract}

\section{Introduction}\label{sec:intro}
We work on the discrete cube $\cube^n$ with the uniform product measure. We write $\E$ and $\Prb$ for expectation and probability, and
\[
\tup{f,g}:=\E[fg],
\quad
\Cov(f,g):=\E[fg]-\E[f]\E[g].
\]
The cube is ordered coordinatewise: $x\le y$ means $x_i\le y_i$ for all $i\in[n]$.

\begin{definition}
A function $f:\cube^n\to\mathbb R$ is \emph{increasing} if
\[
x\le y \quad\Longrightarrow\quad f(x)\le f(y).
\]
\end{definition}

A central theme in the analysis of Boolean functions is that monotonicity forces positive correlation. The classical Harris--Kleitman inequality~\cite{Harris1960,Kleitman1966} asserts that if $f,g:\cube^n\to\cube$ are increasing, then $\Cov(f,g)\ge 0$. A more delicate question is to quantify how much positive correlation is forced by the coordinates on which $f$ and $g$ simultaneously depend. For a Boolean function $f:\cube^n\to\cube$, the influence of the $i$-th coordinate is
\[
\Inf_i[f]:=\Prb\left[f(x)\neq f(x\oplus e_i)\right],
\]
where $e_i$ is the $i$-th standard basis vector and $\oplus$ denotes addition modulo $2$. Following Talagrand, we define the cross-total-influence of $f$ and $g$ by
\[
\Icorr(f,g):=\sum_{i=1}^n\Inf_i[f]\Inf_i[g].
\]
Talagrand~\cite{Talagrand1996correlated} gives the following logarithmic lower bound.
\begin{theorem}[Talagrand's correlation inequality~\cite{Talagrand1996correlated}]\label{thm:Talagrand correlation}
There exists a universal constant $c>0$ such that for every pair of increasing Boolean functions $f,g:\cube^n\to\cube$,
\begin{equation}\label{ineq:Talagrand correlation}
\Cov(f,g)
\ge
c\,\varphi(\Icorr(f,g)),
\qquad
\varphi(x)=\frac{x}{\log(e/x)}.
\end{equation}
\end{theorem}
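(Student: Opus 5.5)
The plan is the standard semigroup (heat-flow) argument of Talagrand, in the form later streamlined by Keller and others. We write the covariance as an integral along the noise semigroup. With the $\pm1$ Fourier expansion, $\Cov(f,g)=\sum_{S\neq\emptyset}\hat f(S)\hat g(S)$. Set $F(t)=\tup{f,T_{e^{-t}}g}$. Then $\Cov(f,g)=F(0)-F(\infty)$, and
\[
\Cov(f,g)=\int_0^\infty \sum_{S}|S|e^{-t|S|}\hat f(S)\hat g(S)\,dt=\int_0^\infty e^{-t}\sum_{i=1}^n\tup{\partial_i f, T_{e^{-t}}\partial_i g}\,dt ,
\]
where $\partial_i f(x)=\frac12(f(x^{i\to1})-f(x^{i\to0}))$ does not depend on $x_i$. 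Since $f,g$ are increasing, $\partial_i f$ and $\partial_i g$ are nonnegative. Their means are $\tfrac12\Inf_i[f]$ and $\tfrac12\Inf_i[g]$, and they take values in $\{0,1/2\}$. So every integrand term is nonnegative, and it suffices to bound the integral from below.

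The key step is a lower bound on $\tup{u,T_\rho v}$ for nonnegative functions $u,v$ with small means $a=\E u$, $b=\E v$. Here $u=\partial_i f$ and $v=\partial_i g$ are indicator-like. We use the reverse hypercontractive (reverse Bonami--Beckner) inequality of Borell / Mossel--O'Donnell--Regev--Steif--Sudakov: for Boolean indicators $\1_A,\1_B$ with $\Prb(A)=a$ and $\Prb(B)=b$,
\[
\tup{\1_A,T_\rho \1_B}\ge (ab)^{1/(1-\rho)}\quad\text{(up to the exact exponent).}
\]
Applied to $\partial_i f$ and $\partial_i g$, this gives $\tup{\partial_i f,T_{e^{-t}}\partial_i g}\gtrsim (I_i J_i)^{c/t}$ for small $t$, with $I_i=\Inf_i[f]$ and $J_i=\Inf_i[g]$. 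We then integrate over $t\in[0,1]$, say. The factor $e^{-t}$ is harmless there, and $\int_0^1 x^{c/t}\,dt\gtrsim 1/\log(e/x)$ for $x\in(0,1]$. This yields
\[
\Cov(f,g)\gtrsim \sum_i \frac{I_iJ_i}{\log(e/(I_iJ_i))}=\sum_i \varphi(I_iJ_i).
\]

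Finally, we pass from $\sum_i\varphi(x_i)$ to $\varphi(\sum_i x_i)$ with $x_i=I_iJ_i$. The function $\varphi$ is increasing and concave on $(0,1]$ with $\varphi(0)=0$, hence subadditive, so $\sum_i\varphi(x_i)\ge\varphi(\sum_i x_i)$ whenever $\sum x_i\le1$. If instead $\Icorr(f,g)>1$, the term-by-term bound gives $\Cov\gtrsim \sum_i x_i/\log(e/x_i)$. Each summand is at least $x_i\cdot$const restricted to large $x_i$, or we simply cap the argument at $1$ by monotonicity, since $\varphi$ only needs a constant lower bound there.

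The main obstacle is the middle step: getting a correct reverse hypercontractive lower bound for the correlation of the small-mean nonnegative functions $\partial_i f,\partial_i g$ with the right exponent in $t$, so that the $t$-integral produces exactly one logarithm. I would first try the two-function form $\E[u\,T_\rho v]\ge\|u\|_p\|v\|_q$ with $p,q<1$ and $(1-p)(1-q)\ge\rho^2$. Taking $p=q=1-\rho$ gives $\|\1_A\|_{1-\rho}=a^{1/(1-\rho)}$, and for $\rho=e^{-t}$ the exponent is $1/(1-e^{-t})\approx 1/t$. This is exactly what the integration requires.
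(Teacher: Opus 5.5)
The paper only cites this theorem from Talagrand and does not prove it, so your argument has to stand on its own. It has a genuine gap at the last step.

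\textbf{The aggregation step fails.} You assert that $\varphi(x)=x/\log(e/x)$ is concave on $(0,1]$ and hence subadditive. It is convex. Writing $L=\log(e/x)$, one has $\varphi'(x)=L^{-1}+L^{-2}$, which increases from $0$ to $2$ on $(0,1]$. Together with $\varphi(0^+)=0$ this makes $\varphi$ superadditive, so $\sum_i\varphi(x_i)\le\varphi(\sum_i x_i)$, the opposite of what you need.

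Your method therefore proves only $\Cov(f,g)\ge c\sum_i\varphi(\Inf_i[f]\Inf_i[g])$, and the loss can be of order $\log n$. For $f=g=\mathrm{Maj}_n$ one has $\Inf_i\asymp n^{-1/2}$, so $\sum_i\varphi(\Inf_i[f]^2)\asymp 1/\log n$, while $\varphi(\Icorr(f,f))\asymp 1$. In this case the theorem is easy from a global fact, $\mathrm{Var}(f)\ge\sum_i\hat f(\{i\})^2=\frac14\Icorr(f,f)$.

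Sharpening the per-coordinate estimate will not fix this. After integration in $\rho$, the bound $\tup{\1_A,T_\rho\1_B}\ge(\Prb(A)\Prb(B))^{1/(1-\rho)}$ is essentially sharp: for a point and its antipode the integral is of order $\varphi(\Prb(A)\Prb(B))$, and the pair AND/OR produces exactly this configuration in every coordinate. So any argument that bounds each $\int_0^1\tup{\partial_i f,T_\rho\partial_i g}\,d\rho$ from below using only $(\Inf_i[f],\Inf_i[g])$ is essentially stuck at $\sum_i\varphi(\Inf_i[f]\Inf_i[g])$. A dimension-free bound in terms of $\Icorr$ needs a global ingredient that sees how the derivative sets $\{\partial_i f\neq0\}$ interact across different $i$. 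That is the substantive part of Talagrand's proof, and it is missing here.

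\textbf{The integration step is set in the wrong regime (repairable).} Since $1-e^{-t}\le t$, the reverse hypercontractive bound satisfies $(I_iJ_i)^{1/(1-e^{-t})}\le(I_iJ_i)^{1/t}$. So on $t\in[0,1]$ the best you can write is $x^{c/t}$ with $c=(1-e^{-1})^{-1}>1$. Then $\int_0^1x^{c/t}\,dt\asymp x^{c}/\log(1/x)=o(\varphi(x))$ as $x\to0$, and the claimed $\gtrsim 1/\log(e/x)$ is false as written.

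The regime that matters is $\rho=e^{-t}\lesssim 1/\log(e/x)$, that is, large $t$. There the exponent $1/(1-\rho)$ is close to $1$, and the weight $e^{-t}\,dt=d\rho$ is essential rather than harmless. Done this way, which is exactly the computation in the paper's proof of Theorem~\ref{thm:averaged-rbb-young}, one gets $\int_0^1x^{1/(1-\rho)}\,d\rho\ge x/(2+\log(1/x))$. This yields $\Cov(f,g)\ge\frac18\sum_i\varphi(\Inf_i[f]\Inf_i[g])$.

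The paper can use such a coordinate-by-coordinate bound because it compares it with $\|\partial_i f*\partial_i g\|_2^2$, which is itself summed over $i$, so nothing is lost in aggregation. Against $\varphi(\Icorr(f,g))$, aggregation is exactly where the argument breaks.
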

The logarithmic correction is known to be necessary in general. Tight examples include small Hamming balls and their duals~\cite{Talagrand1996correlated}, Tribes-type examples~\cite{Keller09correlation}, and halfspaces and their duals~\cite[Corollary~1.2]{KK2019DA}; see also~\cite{KKM2016correlation} for structural conditions related to tightness.

Friedgut, Kahn, Kalai, and Keller~\cite[Conjecture 5.8]{FKKK2018correlation} proposed the following spectral strengthening of Harris--Kleitman.
\begin{conjecture}[Friedgut--Kahn--Kalai--Keller~\cite{FKKK2018correlation}]\label{conj:spectral-FKKK}
For any increasing Boolean functions $f,g:\cube^n\to\cube$,
\begin{equation}\label{eq:spectral-conj}
 \Cov(f,g)\ge 4\sum_{S\neq\emptyset}|S|\hat f(S)^2\hat g(S)^2.
\end{equation}
\end{conjecture}

\begin{remark}\label{rem:FKKK-motivation}
Conjecture~\ref{conj:spectral-FKKK} is motivated by the correlation approach to Chv\'atal's conjecture. Chv\'atal's conjecture~\cite{Chvatal1974} asks whether every decreasing family has a largest intersecting subfamily that is a star. Friedgut--Kahn--Kalai--Keller~\cite{FKKK2018correlation} reformulated this problem in terms of influences and correlation inequalities, reducing one of its central forms to a lower bound on $\Cov(f,g)$ when $f$ is increasing and $g$ is both increasing and antipodal. They then proposed several strengthenings of Harris--Kleitman inequality in Fourier language. Conjecture~\ref{conj:spectral-FKKK} is a diagonal spectral form of this program: it asks the covariance to dominate the common Fourier mass of $f$ and $g$, with each level weighted by its degree. Proposition~\ref{prop:or-majority} in Section~\ref{sec:remarks} shows that the stronger ``dream relation'' from~\cite[Section~2.3]{FKKK2018correlation}, and even any universal-constant version of that relation, fails when the second function is increasing and antipodal: in that example $\Cov(f,g)\asymp 2^{-n}$ while $\sum_i\Inf_i[f]\Inf_i[g]\asymp \sqrt n\,2^{-n}$. Thus the example rules out that particular route from~\cite[Section~2.3]{FKKK2018correlation} toward proving Chv\'atal's conjecture.
\end{remark}

Recently, Chang and Chen~\cite{ChangChen2025submodular} verified Conjecture~\ref{conj:spectral-FKKK} in the structured setting where the two increasing Boolean functions are either both submodular or both supermodular. We prove the following lower bound, which is weaker than Conjecture~\ref{conj:spectral-FKKK} only by a factor of $2$ in the constant.
\begin{theorem}\label{thm:spectral-lower-bound}
For all increasing Boolean functions $f,g:\cube^n\to\cube$,
\begin{equation}\label{eq:half-bound-intro}
\Cov(f,g)\ge
2\sum_{S\neq\emptyset}|S|\hat f(S)^2\hat g(S)^2.
\end{equation}
\end{theorem}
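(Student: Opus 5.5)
We will prove Theorem~\ref{thm:spectral-lower-bound} by integrating a pointwise noise inequality along the standard semigroup interpolation for the covariance. For increasing $f,g$, the partial derivatives $\partial_i f(x)=f(x^{i\to1})-f(x^{i\to0})$ take values in $\{0,1\}$ and do not depend on $x_i$. The plan is to use the representation
\[
\Cov(f,g)=\int_0^1\sum_{i=1}^n \tfrac14\tup{\partial_i f,\,T_\rho \partial_i g}\,d\rho ,
\]
which follows from $\Cov(f,g)=\sum_{S\neq\emptyset}\hat f(S)\hat g(S)$ (in the $\pm1$ character basis) and
\[
\sum_i \tfrac14\tup{\partial_i f,T_\rho\partial_i g}=\sum_S |S|\rho^{|S|-1}\hat f(S)\hat g(S),
\]
since $\widehat{\partial_i f}(S\setminus i)=\pm 2\hat f(S)$ for $i\in S$ with matching signs. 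Similarly,
\[
\sum_{S\ne\emptyset}|S|\hat f(S)^2\hat g(S)^2=\sum_i\tfrac1{16}\|\partial_i f*\partial_i g\|_2^2,
\]
where $*$ is the group convolution $h*k(x)=\E_y h(y)k(x\oplus y)$ on $\cube^{n-1}$, whose Fourier coefficients are the products $\hat h(T)\hat k(T)$. The theorem therefore reduces to a pointwise comparison: for $h,k:\cube^m\to[0,1]$ (here $h=\partial_i f$, $k=\partial_i g$), bound $\tup{h,T_\rho k}$ from below by a constant times $\|h*k\|_2^2$, and integrate in $\rho$.

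The key step is to show $\tup{h,T_\rho k}\ge c(\rho)\,\|h*k\|_2^2$ with $\int_0^1 c(\rho)\,d\rho\ge 1/2$, since then
\[
\Cov(f,g)\ge \tfrac14\cdot\tfrac12\sum_i\|\partial_if*\partial_ig\|_2^2=2\sum_{S\neq\emptyset}|S|\hat f(S)^2\hat g(S)^2 .
\]
For $\rho\le 1/2$ I aim for $c(\rho)=1$. Write $T_\rho k(x)=\E_z k(x\oplus z)$ with $z$ having i.i.d.\ Bernoulli$((1-\rho)/2)$ coordinates, whose density $\mu_\rho$ relative to uniform is at least $(1-\rho)^m$ at every point. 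Then
\[
\tup{h,T_\rho k}=\E_z[\mu_\rho(z)\,(h*k)(z)],
\]
while $\|h*k\|_2^2=\E[(h*k)^2]$. Because $0\le h*k\le \min(\E h,\E k)$, it suffices to control $\E[(h*k)^2]$ by a $\mu_\rho$-weighted first moment. I would first try the reverse Bonami--Beckner inequality $\tup{h,T_\rho k}\ge \|h\|_{p}\|k\|_{q}$ with $p,q<1$ satisfying $(1-p)(1-q)=\rho^2$, and then Young's convolution inequality, in the form $\|h*k\|_2\le\|h\|_{a}\|k\|_{b}$ with $1/a+1/b=3/2$. Combined with $[0,1]$-valuedness, which gives $\|h\|_r^r\le\|h\|_s^s$ for $r\ge s$, this should yield $\|h*k\|_2^2\le \|h\|_{p}\|k\|_{q}$ once the exponents are matched. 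With $p=q=1-\rho$ and $a=b=4/3$, the matching requires $\|h\|_{4/3}^2\le\|h\|_{1-\rho}$; since $\|h\|_{4/3}^{4/3}\le\E h$, this works when $3/2\ge 1/(1-\rho)$, i.e.\ $\rho\le1/3$, and symmetric exponents already recover a constant; tuning the exponents should extend the range to $\rho\le1/2$. For $\rho$ near $1$ I would use only $\mu_\rho\ge(2(1-\rho))^m\cdot 2^{-m}\cdot 2^m$ pointwise together with $h*k\le 1$, which gives a factor $(2(1-\rho))^m$.

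The main obstacle is this exponent bookkeeping: choosing $p,q,a,b$ so that the reverse hypercontractive and Young bounds chain correctly for boolean-valued (indeed $\{0,1\}$-valued) $h,k$ with constant exactly $1$ up to $\rho=1/2$. If it works, $\int_0^1c\ge 1/2$ gives exactly the factor $2$. If it only holds on $[0,1/3]$ with constant $1$, I would then need the sharper density argument, or possibly use $\{0,1\}$-valuedness more aggressively, to cover $[1/3,1/2]$.
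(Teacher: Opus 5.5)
There is a genuine gap in the key step. Your reduction is fine: the interpolation formula, the identity $\sum_{S\ne\emptyset}|S|\hat f(S)^2\hat g(S)^2=\frac1{16}\sum_i\|\partial_if*\partial_ig\|_2^2$, and the target $\int_0^1c(\rho)\,d\rho\ge\frac12$ are all correct. The problem is that chaining reverse Bonami--Beckner with Young cannot give $c(\rho)=1$, or any positive $\rho$-uniform constant, once $\rho>\frac13$, however you tune the exponents.

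To see this, take indicators $h,k$ with means $\alpha,\beta$. The two bounds read $\tup{h,T_\rho k}\ge\alpha^{1/p}\beta^{1/q}$ and $\|h*k\|_2^2\le\alpha^{2/a}\beta^{2/b}$. Chaining them with some $c>0$ requires $c\,\alpha^{2/a}\beta^{2/b}\le\alpha^{1/p}\beta^{1/q}$ for all $\alpha,\beta\in(0,1]$. That forces $1/p\le2/a$ and $1/q\le 2/b$, so $u+v\le 3$, where $u=1/p>1$ and $v=1/q>1$. But then $(1-p)(1-q)=1-\frac{u+v-1}{uv}\le\bigl(1-\frac{2}{u+v}\bigr)^2\le\frac19$, so $\rho\le\frac13$.

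Your argument therefore gives only $\int_0^1c\ge\frac13$, i.e.\ $\Cov(f,g)\ge\frac43\sum_{S\ne\emptyset}|S|\hat f(S)^2\hat g(S)^2$. The near-$1$ step does not rescue this. The density $\mu_\rho(z)=(1+\rho)^{m-|z|}(1-\rho)^{|z|}$ has minimum $(1-\rho)^m$, so your pointwise bound $\mu_\rho\ge(2(1-\rho))^m$ is false. Even the correct factor $(2(1-\rho))^m$, which the paper obtains from $K_{1/2}\le(2(1-\rho))^{-m}K_\rho$ together with the half-noise inequality, integrates to only $O(1/m)$.

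The missing idea in the paper's proof is to not extract a $\rho$-uniform constant at all. For every $\rho\in(0,1)$, keep the mean-dependent bound $\tup{h,T_\rho k}\ge\|h\|_{1-\rho}\|k\|_{1-\rho}\ge(\alpha\beta)^{1/(1-\rho)}$ and integrate it first. With $\alpha\beta=e^{-\lambda}$ and $s=(1-\rho)^{-1}$, one gets $\int_0^1(\alpha\beta)^{1/(1-\rho)}d\rho=\int_1^\infty e^{-\lambda s}s^{-2}\,ds\ge\frac{e^{-\lambda}}{\lambda+2}\ge\frac12e^{-3\lambda/2}$, using $s^{-2}\ge e^{-2(s-1)}$ and $2e^{\lambda/2}\ge\lambda+2$. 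Only then compare with Young's bound $\|h*k\|_2^2\le(\alpha\beta)^{3/2}$. This works where your pointwise version fails: when $\alpha\beta$ is small, the small-$\rho$ part of the integral is of order $\alpha\beta/\log(1/(\alpha\beta))\gg(\alpha\beta)^{3/2}$, which pays for the range $\rho>\frac13$ where reverse hypercontractivity gives nothing comparable.

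If you want to keep your pointwise architecture, you need a genuinely different tool on $(\frac13,\frac12]$. The paper supplies it with the half-noise inequality $\tup{h,T_{1/2}k}\ge\|h*k\|_2^2$ for $[0,1]$-valued $h,k$, proved by induction on dimension. Write $h=a+(-1)^zb$ and $k=c+(-1)^zd$, then average the inductive hypothesis for $a+\varepsilon b$ and $c+\eta d$ over signs with $\E[\varepsilon]=\E[\eta]=0$ and $\E[\varepsilon\eta]=\frac12$. This extends to all $\rho\le\frac12$ via $T_\rho=(1-2\rho)T_0+2\rho T_{1/2}$ in each coordinate, plus a Jensen step for the fully resampled coordinates. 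That gives exactly $c(\rho)=1$ on $[0,\frac12]$, hence the constant $2$.
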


\begin{remark}\label{rem:FKKK-corollary-56}
Theorem~\ref{thm:spectral-lower-bound} implies Corollary~5.6 of~\cite{FKKK2018correlation}, which asserts that for increasing Boolean functions $f$ and $g$, with $g$ balanced, there is a universal $c>0$ such that
\[
\Cov(f,g)\ge c\sum_{S\neq\emptyset}\hat{f}(S)^2\hat{g}(S)^2.
\]
After that corollary, Friedgut--Kahn--Kalai--Keller wrote that ``It would be interesting to extend it to other contexts and to find a proof that's more direct than the one in Section~6.3.'' Theorem~\ref{thm:spectral-lower-bound} gives such a direct proof with an explicit constant, without the balance assumption, and with the stronger weight $|S|$.
\end{remark}

To approach Conjecture~\ref{conj:spectral-FKKK}, we use the covariance interpolation formula to reformulate Conjecture~\ref{conj:spectral-FKKK} as a semigroup-convolution inequality. For $0\le\rho\le1$, the Bonami--Beckner noise operator~\cite{Beckner1975,Bonami1970} $T_\rho$ acts on functions $f:\cube^n\to \mathbb{R}$ in the following way: $T_{\rho}f(x)$ is the average of $f(y)$ over $y\in\cube^n$ that are $\rho$-correlated with $x$, i.e. for every $i\in [n]$, $y_i= x_i$ with probability $\rho$, and otherwise $y_i$ is resampled uniformly and independently from $\cube$.

We also use the normalized convolution on the group $\mathbb F_2^n$:
\[
(f*g)(z):=\E_x[f(x)g(x\oplus z)],
\quad \forall z\in\cube^n.
\]
By the covariance interpolation formula~\eqref{eq:semigroup-derivative-KMS} and the Fourier spectral property of convolution in Fact~\ref{fact:convolution-fourier}, Conjecture~\ref{conj:spectral-FKKK} is equivalent to the following statement.
\begin{conjecture}\label{conj:semigroup-convolution}
For any increasing Boolean functions $f,g:\cube^n\to\cube$,
\begin{equation}\label{eq:semigroup-convolution-conj}
\sum_{i=1}^n\int_0^1\tup{\partial_i f,T_\rho\partial_i g}\,d\rho
\ge
\sum_{i=1}^n\|\partial_i f*\partial_i g\|_2^2,
\end{equation}
where
\[
\partial_i h(x):=h\left(x^{(i\to1)}\right)-h\left(x^{(i\to0)}\right),
\quad x\in\cube^{[n]\setminus\{i\}},
\]
and $x^{(i\to b)}$ denotes the vector obtained from $x\in\cube^{[n]\setminus\{i\}}$ by inserting $b\in\{0,1\}$ in the $i$-th coordinate.
\end{conjecture}

The proof of Theorem~\ref{thm:spectral-lower-bound} is based on a weak averaged form of Conjecture~\ref{conj:semigroup-convolution}. It follows from the standard reverse hypercontractivity together with Young's convolution inequality.
\begin{theorem}\label{thm:averaged-rbb-young}
Let $n\ge0$ and let $f,g:\cube^n\to[0,1]$. Then
\begin{equation}\label{eq:averaged-rbb-young-intro}
\int_0^1\tup{f,T_\rho g}\,d\rho
\ge
\frac12\|f*g\|_2^2.
\end{equation}
\end{theorem}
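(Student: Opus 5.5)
The plan is to reduce \eqref{eq:averaged-rbb-young-intro} to a one-variable inequality in the quantity $x:=\E[f]\,\E[g]\in[0,1]$. I would bound the left-hand side from below pointwise in $\rho$ using reverse hypercontractivity, bound the right-hand side from above using Young's inequality, and compare the two after integrating over $\rho$. The case $n=0$ is just the scalar case and is covered by the same argument. If $\E f=0$ or $\E g=0$, then $f*g\equiv0$ and there is nothing to prove, so assume both are positive.

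\emph{Lower bound.} I use the reverse Bonami--Beckner inequality: for nonnegative $f,g$ and $p,q<1$ with $(1-p)(1-q)\ge\rho^2$,
\[
\tup{f,T_\rho g}\ge\|f\|_p\|g\|_q .
\]
Taking $p=q=1-\rho$ gives $\tup{f,T_\rho g}\ge\|f\|_{1-\rho}\|g\|_{1-\rho}$. Since $0\le f\le1$, we have $f^{1-\rho}\ge f$ pointwise, so $\E f^{1-\rho}\ge\E f$ and hence $\|f\|_{1-\rho}\ge(\E f)^{1/(1-\rho)}$; the same holds for $g$. Therefore
\[
\tup{f,T_\rho g}\ge x^{1/(1-\rho)}\qquad\text{for all }0\le\rho<1 .
\]

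\emph{Upper bound.} I use Young's inequality on $\mathbb F_2^n$ with the normalized measure, with exponents $(1,2;2)$; this is just Minkowski's integral inequality. It gives
\[
\|f*g\|_2\le\|f\|_1\|g\|_2\le \E f\,(\E g)^{1/2},
\]
where the last step uses $g^2\le g$. Symmetrically, $\|f*g\|_2\le(\E f)^{1/2}\E g$. Multiplying the two bounds yields $\|f*g\|_2^2\le x^{3/2}$.

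\emph{Scalar inequality.} It then suffices to prove that for every $x\in(0,1]$,
\[
\int_0^1 x^{1/(1-\rho)}\,d\rho\ \ge\ \tfrac12\,x^{3/2}.
\]
Substituting $t=1/(1-\rho)$ and $x=e^{-u}$ with $u\ge0$, this becomes
\[
\Phi(u):=\int_1^\infty e^{-u(t-3/2)}\,t^{-2}\,dt\ \ge\ \tfrac12 .
\]
I expect this elementary step to be the main obstacle. $\Phi$ is convex in $u$, with $\Phi(0)=1$ and $\Phi(u)\to\infty$ as $u\to\infty$, but $\Phi'(0^+)=-\infty$, so one must control its minimum. Rough numerics at $u=1/2$ and $u=2$ give $\Phi\approx0.7$. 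To prove the bound I would split the integral at $t=3/2$. On $[1,3/2]$ the integrand is at least $t^{-2}e^{u(3/2-t)}$, which is $\ge t^{-2}$ and contributes at least $1/3$, with extra growth $\approx e^{u/2}$ for large $u$. On $[3/2,\infty)$ I would use $t^{-2}\ge$ a suitable piecewise-constant minorant and integrate the exponential explicitly. Balancing the two pieces for small and large $u$ separately should give $\Phi(u)\ge1/2$ for all $u$.

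If this lossy route turns out to fall short of the constant $1/2$ somewhere, the fallback is to use the sharper pointwise bound from reverse hypercontractivity with unequal exponents. Alternatively, one can keep the full $\|f\|_{1-\rho}$ rather than passing to $\E f$, and optimize the Young exponents as a function of $\rho$.
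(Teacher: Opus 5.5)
Your architecture is the paper's. Reverse hypercontractivity with $p=q=1-\rho$, together with $f^{1-\rho}\ge f$, gives $\tup{f,T_\rho g}\ge x^{1/(1-\rho)}$. A Young-type bound gives $\|f*g\|_2^2\le x^{3/2}$. Everything then reduces to the scalar inequality $\int_0^1 x^{1/(1-\rho)}\,d\rho\ge\frac12 x^{3/2}$.

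The only variation is in the Young step. You multiply the two endpoint bounds $\|f*g\|_2\le\|f\|_1\|g\|_2$ and $\|f*g\|_2\le\|f\|_2\|g\|_1$, which is just Minkowski. The paper instead uses the single exponent triple $(\frac43,\frac43;2)$ together with $\|f\|_{4/3}^2\le(\E f)^{3/2}$. Both give exactly $x^{3/2}$; yours is marginally more elementary.

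The gap is that you never prove the scalar inequality, which you correctly reduce to $\Phi(u)\ge\frac12$. Splitting at $t=3/2$ and minorizing $t^{-2}$ by step functions is only a sketch ("should give"). Making it rigorous uniformly in $u$ would need real case analysis in the intermediate range $u\approx 1$, where $\Phi$ attains its minimum of about $0.665$. So the inequality is true with room to spare, and none of your fallbacks (unequal exponents, optimizing Young exponents in $\rho$) is needed.

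The missing idea is to minorize $t^{-2}$ by an exponential rather than by step functions. Since $\log t\le t-1$, we have $t^{-2}=e^{-2\log t}\ge e^{-2(t-1)}$ for $t\ge1$. Hence
\[
\int_1^\infty e^{-ut}\,t^{-2}\,dt\ \ge\ \int_1^\infty e^{-ut}e^{-2(t-1)}\,dt=\frac{e^{-u}}{u+2},
\]
that is, $\Phi(u)\ge e^{u/2}/(u+2)$. Then $e^{u/2}\ge 1+\frac u2$ gives $\Phi(u)\ge\frac12$ for all $u\ge0$. This is exactly how the paper concludes. With this two-line computation inserted, your proof is complete.
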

Applying Theorem~\ref{thm:averaged-rbb-young} to $\partial_i f,\partial_i g$ and summing over $i$ gives Theorem~\ref{thm:spectral-lower-bound}; see Section~\ref{sec:rbb-young}.

We also prove a sharp pointwise theorem between noise correlation and convolution energy for arbitrary $[0,1]$-valued functions,
identifying the optimal constant for each fixed $0\le \rho\le 1$.
\begin{theorem}\label{thm:general-noise-comparison}
Let $n\ge0$ and let $0\le\rho\le1$. Let $c_{\rho,n}$ be the largest constant such that, for every $f,g:\cube^n\to[0,1]$,
\[
    \tup{f,T_\rho g}\ge c_{\rho,n}\|f*g\|_2^2.
\]
If $n=0$, then $c_{\rho,0}=1$ for every $0\le\rho\le1$. If $n\ge1$, then
\[
    c_{\rho,n}=
    \begin{cases}
        1, & 0\le \rho\le \frac12,\\[4pt]
        \bigl(2(1-\rho)\bigr)^n, & \frac12<\rho<1,\\[4pt]
        0, & \rho=1.
    \end{cases}
\]
\end{theorem}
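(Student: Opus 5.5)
The upper bounds $c_{\rho,n}\le(\cdot)$ come from explicit test functions; the lower bounds I would try to prove by induction on $n$. For the upper bounds, write $\mathbf 1$ for the all-ones vector. First take $f=\1_{\{0\}}$ and $g=\1_{\{\mathbf 1\}}$. Then $\tup{f,T_\rho g}=2^{-n}\bigl(\tfrac{1-\rho}{2}\bigr)^n$. Also $f*g=2^{-n}\1_{\{\mathbf 1\}}$, so $\|f*g\|_2^2=2^{-3n}$. The ratio is therefore $(2(1-\rho))^n$, which gives $c_{\rho,n}\le (2(1-\rho))^n$ for every $\rho$, and $c_{1,n}=0$. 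Second, take $f=g\equiv1$: then $\tup{f,T_\rho g}=\|f*g\|_2^2=1$, so $c_{\rho,n}\le1$. Together these give $c_{\rho,n}\le\min\{1,(2(1-\rho))^n\}$. This matches the claimed value, since $\min\{1,(2(1-\rho))^n\}$ equals $1$ exactly when $\rho\le 1/2$. The case $n=0$ is the scalar inequality $ab\ge (ab)^2$ for $a,b\in[0,1]$.

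For the lower bound, I would start from the kernel representation. Put $h:=f*g$, which takes values in $[0,1]$. Then
\[
\tup{f,T_\rho g}=\sum_{z}K_\rho(z)\,h(z),\qquad K_\rho(z)=\Bigl(\tfrac{1+\rho}{2}\Bigr)^{n-|z|}\Bigl(\tfrac{1-\rho}{2}\Bigr)^{|z|},
\qquad
\|f*g\|_2^2=2^{-n}\sum_z h(z)^2 .
\]
The naive bound $h^2\le h$ together with $2^nK_\rho\ge(1-\rho)^n$ only yields $(1-\rho)^n$, so it is off by $2^n$. The lost factor must be recovered from the constraints linking $h$ across different $z$. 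The key one is that $h(z)=\E_x f(x)g(x\oplus z)$ is a correlation of two fixed functions. For example, $\sum_z h(z)=2^n\E f\,\E g$ and $h(z)\le\min(\E f,\E g)$. These two facts already give $\E_z h^2\le \E f\,\E g=\tup{f,T_0g}$, which settles $\rho=0$.

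For general $\rho$ I would induct on $n$ by splitting off the last coordinate. Write $f=(f_0,f_1)$ and $g=(g_0,g_1)$ for the restrictions to $x_n=0,1$, which are functions on $\cube^{n-1}$ with values in $[0,1]$. With $T'_\rho$ the noise operator on $\cube^{n-1}$ and $\epsilon:=\tfrac{1-\rho}{2}$,
\[
\tup{f,T_\rho g}=\tfrac12\sum_{a,b}\bigl((1-\epsilon)\1_{a=b}+\epsilon\1_{a\ne b}\bigr)\tup{f_a,T'_\rho g_b},
\]
\[
\|f*g\|_2^2=\tfrac14\Bigl(\|f_0*g_0+f_1*g_1\|_2^2+\|f_0*g_1+f_1*g_0\|_2^2\Bigr).
\]
Expanding the right-hand side produces the diagonal terms $\|f_a*g_b\|^2$ and cross terms $\tup{f_0*g_0,f_1*g_1}$, $\tup{f_0*g_1,f_1*g_0}$. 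I would bound the cross terms by AM--GM, or more carefully by Cauchy--Schwarz weighted to match the kernel coefficients. Then I would apply the inductive hypothesis to each pair $(f_a,g_b)$, using $c_{\rho,n-1}$ and, where useful, also the trivial bound $\tup{f_a,T'_\rho g_b}\ge\tup{f_a,T'_\rho g_b}^2$-type information coming from $[0,1]$-valuedness. The target for the inductive step is a one-dimensional inequality of the form
\[
(1-\epsilon)(A_{00}+A_{11})+\epsilon(A_{01}+A_{10})\ \ge\ \gamma\cdot\tfrac12\bigl(\text{quadratic form in the } f_a*g_b\bigr),
\]
with the multiplier $\gamma=1$ when $\rho\le1/2$ and $\gamma=2(1-\rho)$ when $\rho>1/2$. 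When $\rho>1/2$, this multiplier gives exactly the product $(2(1-\rho))^n$ after $n$ steps.

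The main obstacle is this one-step inequality, because the inequality is not bilinear in $(f,g)$ and so does not tensorize automatically. The cross terms $\tup{f_0*g_0,f_1*g_1}$ must be absorbed using the off-diagonal weight $\epsilon$. That weight is small when $\rho$ is close to $1$, which is where the factor $2(1-\rho)=4\epsilon$ appears. For $\rho\le1/2$ we have $\epsilon\ge1/4$, and the cross terms should be absorbable with no loss. If the direct inductive absorption fails, I would first check the base case $n=1$ by hand. There it is a polynomial inequality in the four variables $f_0,f_1,g_0,g_1\in[0,1]$, and by multilinearity it suffices to check the vertices of $[0,1]^4$; this also reveals which cross-term estimate is needed. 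I would then strengthen the induction hypothesis to a matrix-valued statement about the $2\times2$ array $\bigl(\tup{f_a,T'_\rho g_b}\bigr)_{a,b}$ versus $\bigl(\tup{f_a*g_b,f_{a'}*g_{b'}}\bigr)$, which does tensorize.
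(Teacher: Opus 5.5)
Your sharpness examples are correct and are the same as the paper's: $f=g\equiv1$, and $f=\mathbf{1}_{\{0^n\}}$, $g=\mathbf{1}_{\{1^n\}}$. The $n=0$ and $\rho=0$ cases are also fine. The gap is the lower bound for $0<\rho<1$, which is the substance of the theorem. You reduce it to a one-step inequality, call that ``the main obstacle,'' and offer only unexecuted fallbacks.

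The plan as sketched would also not close, for two reasons. First, the splitting formula is off by a factor $2$. Since $(f*g)(y,\delta)=\frac12\sum_{z}(f_z*g_{z\oplus\delta})(y)$, one has $\|f*g\|_2^2=\frac18\bigl(\|f_0*g_0+f_1*g_1\|_2^2+\|f_0*g_1+f_1*g_0\|_2^2\bigr)$. With your $\frac14$, the stated one-step target is false for $\gamma=1$: for $f=g\equiv1$ the left side is $2$ and the right side is $4$. Second, write $D_{ab}:=\|f_a*g_b\|_2^2$ and $A_{ab}:=\langle f_a,T'_\rho g_b\rangle\ge c_{\rho,n-1}D_{ab}$. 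Your left side is $\frac14\bigl[(1+\rho)(A_{00}+A_{11})+(1-\rho)(A_{01}+A_{10})\bigr]$. AM--GM, or any weighted Cauchy--Schwarz, applied to each cross term within its own block gives at best $\|f*g\|_2^2\le\frac14\sum_{a,b}D_{ab}$. Comparing coefficients of $D_{01}+D_{10}$ then forces $\gamma\le1-\rho$, which is exactly the naive $(1-\rho)^n$ you wanted to beat. Absorbing cross terms with the small weight $\epsilon$ is precisely where a factor $2$ per step is lost.

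The missing observation is that the two cross terms are equal. By Parseval and $\widehat{u*v}=\hat u\hat v$,
\[
\langle f_0*g_0,f_1*g_1\rangle=\langle f_0*g_1,f_1*g_0\rangle=\sum_S\hat f_0(S)\hat g_0(S)\hat f_1(S)\hat g_1(S)=:X,
\]
and $X\le\min\{\sqrt{D_{00}D_{11}},\sqrt{D_{01}D_{10}}\}$. Hence, for every $t\in[0,1]$,
\[
\|f*g\|_2^2=\tfrac18\Bigl(\sum_{a,b}D_{ab}+4X\Bigr)\le\tfrac18\bigl[(1+2t)(D_{00}+D_{11})+(3-2t)(D_{01}+D_{10})\bigr].
\]
So the cross-term mass can be distributed freely between the two blocks, in particular charged entirely to the heavy diagonal block. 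Taking $t=\frac12+\rho$ when $\rho\le\frac12$ reproduces the kernel weights exactly, giving $\gamma=1$. Taking $t=1$ when $\rho\ge\frac12$ gives $\gamma=2(1-\rho)$, since $2(1-\rho)\le\frac{2(1+\rho)}{3}$ there. This would complete your induction uniformly in $\rho$.

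The paper proves only the case $\rho=\frac12$ this way. Its correlated signs with $\mathbb{E}[\varepsilon\eta]=\frac12$ put weights $\frac38,\frac18$ on the pairs $(f_a,g_b)$. In the decomposition $f=a+(-1)^zb$, $g=c+(-1)^zd$ the convolution has no cross terms at all: $\|f*g\|_2^2=\sum_R\bigl(\hat a(R)^2\hat c(R)^2+\hat b(R)^2\hat d(R)^2\bigr)$, with slack $\sum_R\bigl(\hat a(R)\hat d(R)+\hat b(R)\hat c(R)\bigr)^2$. The paper then reaches $\rho<\frac12$ via the coordinatewise decomposition $T_\rho=(1-2\rho)T_0+2\rho T_{1/2}$ plus Jensen on fibers. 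It reaches $\rho>\frac12$ via the kernel comparison $K_{1/2}\le(2(1-\rho))^{-n}K_\rho$.
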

The proof proceeds by induction on the dimension, following the spirit of standard inductive proofs of hypercontractive inequalities. Integrating this pointwise theorem over $\rho$ gives the following slightly stronger spectral lower bound.
\begin{corollary}\label{cor:integrated-noise-bound}
For every $n\ge1$ and all increasing Boolean functions $f,g:\cube^n\to\cube$,
\[
\Cov(f,g)\geq 4\cdot\frac{n+1}{2n}
\sum_{S\neq\emptyset}|S|\hat f(S)^2\hat g(S)^2.
\]
\end{corollary}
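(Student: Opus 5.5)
The plan is to reduce Corollary~\ref{cor:integrated-noise-bound} to Theorem~\ref{thm:general-noise-comparison} through the covariance interpolation formula, exactly as Theorem~\ref{thm:spectral-lower-bound} is obtained from Theorem~\ref{thm:averaged-rbb-young}.

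First, recall the reformulation behind Conjecture~\ref{conj:semigroup-convolution}. By the covariance interpolation formula,
\[
\Cov(f,g)=\frac14\sum_{i=1}^n\int_0^1\tup{\partial_i f,T_\rho\partial_i g}\,d\rho .
\]
Here each $\partial_i f,\partial_i g$ is a function on $\cube^{n-1}$. By Fact~\ref{fact:convolution-fourier}, and because $\widehat{\partial_i f}(S\setminus\{i\})=\pm2\hat f(S)$ for $S\ni i$, we have
\[
\sum_i\|\partial_i f*\partial_i g\|_2^2=16\sum_{S\ne\emptyset}|S|\hat f(S)^2\hat g(S)^2 .
\]
Since the functions $f,g$ are increasing and Boolean, each $\partial_i f$ and $\partial_i g$ is $\{0,1\}$-valued, hence $[0,1]$-valued. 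I will verify the normalizing constants against the paper's conventions; the target is that $\Cov(f,g)\ge \frac{C}{16}\cdot 16\cdot\frac14\sum|S|\hat f^2\hat g^2$, where $C$ is the integrated constant.

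Second, apply Theorem~\ref{thm:general-noise-comparison} in dimension $m=n-1$ to $\partial_i f,\partial_i g$ for every fixed $\rho$, and integrate in $\rho$:
\[
\int_0^1\tup{\partial_i f,T_\rho\partial_i g}\,d\rho\ \ge\ \Bigl(\int_0^1 c_{\rho,n-1}\,d\rho\Bigr)\|\partial_i f*\partial_i g\|_2^2 .
\]
For $m=n-1\ge1$ the constant is
\[
\int_0^1 c_{\rho,m}\,d\rho=\frac12+\int_{1/2}^1 \bigl(2(1-\rho)\bigr)^{m}d\rho=\frac12+\frac{1}{2(m+1)}=\frac{n+1}{2n}.
\]
For $n=1$ we have $m=0$, so $c_{\rho,0}=1$ and the integral equals $1=\frac{n+1}{2n}$. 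Hence the formula $\frac{n+1}{2n}$ holds uniformly for all $n\ge1$.

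Third, sum over $i$ and use the two identities above. Compared with Conjecture~\ref{conj:spectral-FKKK}, the constant $4$ is replaced by $4\cdot\frac{n+1}{2n}$, which gives the stated bound. There is no real obstacle here. The only point needing care is the bookkeeping: the dimension of $\partial_i f$ is $n-1$ rather than $n$, which is what produces $\frac{n+1}{2n}$, and the constants must match the version giving Conjecture~\ref{conj:spectral-FKKK} when the integrated constant equals $1$. As a consistency check, the integrated constant $\frac12$ from Theorem~\ref{thm:averaged-rbb-young} yields the constant $2$ of Theorem~\ref{thm:spectral-lower-bound}.
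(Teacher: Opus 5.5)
Your proposal is correct and matches the paper's proof: you apply Theorem~\ref{thm:general-noise-comparison} in dimension $n-1$ to the $\{0,1\}$-valued derivatives $\partial_i f,\partial_i g$, integrate $c_{\rho,n-1}$ to get $\frac{n+1}{2n}$ (with $c_{\rho,0}=1$ covering $n=1$), and combine the interpolation formula with $\sum_i\|\partial_i f*\partial_i g\|_2^2=16\sum_{S\neq\emptyset}|S|\hat f(S)^2\hat g(S)^2$. The only slip is your aside stating the ``target'' prefactor as $\frac{C}{16}\cdot16\cdot\frac14$, which should be $\frac14\cdot C\cdot 16=4C$; your final constant $4\cdot\frac{n+1}{2n}$ and your consistency check against Theorem~\ref{thm:spectral-lower-bound} are right.
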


\medskip
\noindent\emph{Organization.}
This paper is organized as follows. Section~\ref{sec:prelim} contains the preliminaries on Fourier analysis over the hypercube, discrete derivatives, the noise operator, and convolution notation used throughout the paper; it also proves the covariance interpolation formula and the formal equivalence between Conjectures~\ref{conj:spectral-FKKK} and~\ref{conj:semigroup-convolution}. Section~\ref{sec:rbb-young} proves Theorem~\ref{thm:averaged-rbb-young} and Theorem~\ref{thm:spectral-lower-bound}. Section~\ref{sec:sharp-comparison} proves Theorem~\ref{thm:general-noise-comparison} and Corollary~\ref{cor:integrated-noise-bound}. Section~\ref{sec:remarks} records the connection with Chv\'atal's conjecture and explains why the dream-relation approach from~\cite[Section~2.3]{FKKK2018correlation} cannot prove it.

\section{Preliminaries}\label{sec:prelim}
We consider real-valued functions $f:\{0,1\}^n \to \mathbb{R}$, equipped with the inner product $\tup{f,g}=\mathbb{E}_x[f(x)g(x)]$. As above, $\Cov(f,g)$ denotes covariance. For $S\subseteq[n]$, define the Fourier--Walsh character by $\chi_S(x):=(-1)^{\sum_{i\in S}x_i}$. The family $
\{\chi_S\}_{S\subseteq[n]}$ is an orthonormal basis of $L^2(\{0,1\}^n)$. The Fourier--Walsh expansion of $f$ is given by $f(x)=\sum_{S\subseteq[n]}\hat{f}(S)\chi_S(x)$, where $\hat{f}(S)=\tup{f,\chi_S}$. For $p>0$, we write $\|f\|_p:=\left(\mathbb{E}_x [|f(x)|^p]\right)^{1/p}$, with the expectation taken over the relevant cube. This is the usual $L^p$-norm for $p\ge1$, and the standard $L^p$ quasi-norm for $0<p<1$.

For $i\in[n]$, the $i$-th discrete derivative is the function on $\cube^{[n]\setminus\{i\}}$ given by
\[
\partial_i f(x):=f(x^{(i\to1)})-f(x^{(i\to0)}),
\]
where $x^{(i\to b)}$ denotes the point obtained from $x\in\cube^{[n]\setminus\{i\}}$ by inserting $b\in\{0,1\}$ in the $i$-th coordinate.

\begin{fact}\label{fact:fourier-derivative}
Let $f:\cube^n\to\mathbb R$ and $i\in[n]$. Then
\[
\partial_i f(x)=-2\sum_{S\ni i}\hat f(S)\chi_{S\setminus\{i\}}(x).
\]
Consequently, $\E[\partial_i f]=-2\hat f(\{i\})$.
\end{fact}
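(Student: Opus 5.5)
This fact follows from a direct computation on the Fourier--Walsh expansion. By linearity of $\partial_i$ it suffices to compute $\partial_i\chi_S$ for each $S\subseteq[n]$ and then sum against the coefficients $\hat f(S)$.

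First, fix $S$ with $i\notin S$. Then $\chi_S$ does not depend on the $i$-th coordinate, so $\chi_S(x^{(i\to1)})=\chi_S(x^{(i\to0)})$ and $\partial_i\chi_S=0$.

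Next, fix $S\ni i$. Factor the character as $\chi_S(y)=(-1)^{y_i}\chi_{S\setminus\{i\}}(y)$, where the second factor ignores coordinate $i$. Inserting $b\in\cube$ at coordinate $i$ gives $\chi_S(x^{(i\to b)})=(-1)^b\chi_{S\setminus\{i\}}(x)$ for $x\in\cube^{[n]\setminus\{i\}}$. Therefore
\[
\partial_i\chi_S(x)=(-1-1)\chi_{S\setminus\{i\}}(x)=-2\chi_{S\setminus\{i\}}(x).
\]
Summing over $S$ with weights $\hat f(S)$ yields the displayed formula.

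For the consequence, take expectations over the uniform measure on $\cube^{[n]\setminus\{i\}}$. We have $\E[\chi_T]=0$ unless $T=\emptyset$, and $\E[\chi_\emptyset]=1$. The only surviving term is therefore $S=\{i\}$, which gives $\E[\partial_i f]=-2\hat f(\{i\})$.

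The argument is purely algebraic. The only point needing care is the sign convention: $\chi_S$ uses $(-1)^{x_i}$, so setting $x_i=1$ contributes $-1$, and the factor is $-2$ rather than $+2$.
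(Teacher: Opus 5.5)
Your proof is correct. The paper states this as a standard fact without proof, and your character-by-character computation ($\partial_i\chi_S=0$ for $i\notin S$, $\partial_i\chi_S=-2\chi_{S\setminus\{i\}}$ for $i\in S$, then linearity and $\E[\chi_T]=0$ unless $T=\emptyset$) is the standard verification behind it.
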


The noise operator $T_\rho$, $0\le\rho\le1$, is defined by
\[
T_\rho f(x)=\sum_{S\subseteq[n]}\rho^{|S|}\hat f(S)\chi_S(x).
\]
Equivalently, $T_\rho f(x)=\E[f(Y)]$, where each coordinate of $Y$ is kept equal to the corresponding coordinate of $x$ with probability $\rho$ and is otherwise resampled uniformly. Thus its Markov kernel is
\begin{equation}\label{eq:noise-kernel}
P_\rho(x,y)=2^{-n}(1+\rho)^{n-d(x,y)}(1-\rho)^{d(x,y)},
\end{equation}
where $d(x,y)$ is Hamming distance. The operator $T_\rho$ is self-adjoint, positive, and satisfies $T_1={\rm id}$ and $T_0f=\E[f]$.

For $f,g:\cube^n\to\mathbb R$, define the convolution
\[
(f*g)(z):=\E_x[f(x)g(x\oplus z)],\quad z\in\cube^n.
\]

\begin{fact}[Fourier transform of convolution; see {\cite[Definition~1.24 and Theorem~1.27]{ryanbook2014}}]\label{fact:convolution-fourier}
For any $f,g:\cube^n\to\mathbb R$ and any $S\subseteq[n]$,
\begin{equation}\label{eq:convolution-fourier}
\widehat{f*g}(S)=\hat f(S)\hat g(S).
\end{equation}
\end{fact}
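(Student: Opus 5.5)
The plan is a direct computation from the definitions of the Fourier coefficient and the convolution, using only that the characters are multiplicative and that the uniform measure on $\mathbb F_2^n$ is translation invariant. By definition,
\[
\widehat{f*g}(S)=\E_z\bigl[(f*g)(z)\chi_S(z)\bigr]=\E_{x,z}\bigl[f(x)\,g(x\oplus z)\,\chi_S(z)\bigr],
\]
where $x$ and $z$ are independent and uniform on $\cube^n$.

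\textbf{Change of variables.} Set $y:=x\oplus z$. For each fixed $x$, the map $z\mapsto x\oplus z$ is a bijection of $\cube^n$. Hence if $(x,z)$ is uniform on $\cube^n\times\cube^n$, then so is $(x,y)$. In particular $x$ and $y$ are independent and uniform. Since $z=x\oplus y$, we can rewrite $\chi_S(z)$ in terms of $x$ and $y$.

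\textbf{Multiplicativity of characters.} For every coordinate $i$ we have $(x\oplus y)_i\equiv x_i+y_i \pmod 2$. Only the parity of $\sum_{i\in S}(x\oplus y)_i$ enters $\chi_S$, so
\[
\chi_S(x\oplus y)=(-1)^{\sum_{i\in S}(x_i+y_i)}=\chi_S(x)\chi_S(y).
\]
Substituting this gives
\[
\widehat{f*g}(S)=\E_{x,y}\bigl[f(x)\chi_S(x)\,g(y)\chi_S(y)\bigr].
\]

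\textbf{Factorization.} Because $x$ and $y$ are independent, the expectation splits as $\E_x[f(x)\chi_S(x)]\,\E_y[g(y)\chi_S(y)]$. This equals $\hat f(S)\hat g(S)$, which is the claim.

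There is no real obstacle here. The only points needing care are the measure-preserving substitution $z\mapsto x\oplus z$ and the identity $\chi_S(x\oplus y)=\chi_S(x)\chi_S(y)$, which holds because $\chi_S$ depends only on the mod-$2$ parity. As a byproduct, Parseval then gives $\|f*g\|_2^2=\sum_S\hat f(S)^2\hat g(S)^2$, which is the form in which this fact will be used in the reformulation as Conjecture~\ref{conj:semigroup-convolution}.
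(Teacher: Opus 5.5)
Your proof is correct. The paper gives no proof of its own and simply cites O'Donnell's Theorem~1.27, whose proof is the same computation: the substitution $y=x\oplus z$, multiplicativity $\chi_S(x\oplus y)=\chi_S(x)\chi_S(y)$, and independence of $x,y$. Over $\mathbb F_2^n$ the paper's $g(x\oplus z)$ is the same as $g(z-x)$ in the cited definition, so the two conventions agree.
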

We shall use the following standard semigroup representation for covariance, in a form closely related to that of Keller--Mossel--Sen~\cite{KMS2014correlation}.
\begin{lemma}\label{lem:semigroup-derivative-KMS}
For all $f,g:\cube^n\to\mathbb R$,
\begin{equation}\label{eq:semigroup-derivative-KMS}
\Cov(f,g)=\frac14\sum_{i=1}^n\int_0^1 \tup{\partial_i f,T_\rho\partial_i g}\,d\rho.
\end{equation}
\end{lemma}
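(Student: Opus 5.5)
The plan is to prove the identity on the Fourier side: expand both sides in the Walsh basis and compare them level by level. By bilinearity it suffices to treat characters, so the claim reduces to two coefficient identities.

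\emph{Left-hand side.} Since $\hat f(\emptyset)=\E[f]$ and the characters are orthonormal,
\[
\Cov(f,g)=\sum_{S\neq\emptyset}\hat f(S)\hat g(S).
\]

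\emph{Right-hand side.} By Fact~\ref{fact:fourier-derivative}, $\partial_i f=-2\sum_{S\ni i}\hat f(S)\chi_{S\setminus\{i\}}$ as a function on $\cube^{[n]\setminus\{i\}}$. The noise operator on $n-1$ coordinates multiplies $\chi_{S\setminus\{i\}}$ by $\rho^{|S|-1}$. Orthonormality of the characters on $\cube^{[n]\setminus\{i\}}$ then gives
\[
\tup{\partial_i f,T_\rho\partial_i g}=4\sum_{S\ni i}\rho^{|S|-1}\hat f(S)\hat g(S).
\]
Integrating over $\rho\in[0,1]$ gives $\int_0^1\rho^{|S|-1}\,d\rho=1/|S|$ for $S\ni i$; this is where $S\neq\emptyset$ is needed. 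Hence
\[
\frac14\int_0^1\tup{\partial_i f,T_\rho\partial_i g}\,d\rho=\sum_{S\ni i}\frac{\hat f(S)\hat g(S)}{|S|}.
\]

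\emph{Summation.} Summing over $i\in[n]$ and exchanging the finite sums, each nonempty $S$ is counted exactly $|S|$ times. This cancels the factor $1/|S|$, so the right-hand side equals $\sum_{S\neq\emptyset}\hat f(S)\hat g(S)$, which is the left-hand side.

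All the sums are finite, so there are no convergence issues. The only point needing care is the bookkeeping in the second step: $T_\rho$ must act on the $(n-1)$-dimensional cube. That is what produces the exponent $|S|-1$ rather than $|S|$, and it is what makes the integral equal exactly $1/|S|$.
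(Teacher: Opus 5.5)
Your proposal is correct and is essentially the paper's proof: both rest on the Fourier identity $\tfrac14\sum_i\tup{\partial_i f,T_\rho\partial_i g}=\sum_{S\neq\emptyset}|S|\rho^{|S|-1}\hat f(S)\hat g(S)$. The only difference is packaging. The paper recognizes this as $\frac{d}{d\rho}\tup{f,T_\rho g}$ and applies the fundamental theorem of calculus between $T_0$ and $T_1$, whereas you integrate each monomial $\rho^{|S|-1}$ directly and then sum over $i$.
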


\begin{proof}
For $0<\rho<1$, the Fourier representation of $T_\rho$ gives
\[
\frac{d}{d\rho}\tup{f,T_{\rho}g}
=\sum_{S\neq \emptyset}|S|\rho^{|S|-1}\hat f(S)\hat g(S).
\]
By Fact~\ref{fact:fourier-derivative},
\[
\frac14\sum_{i=1}^n\tup{\partial_i f,T_\rho\partial_i g}
=\sum_{i=1}^n\sum_{S\ni i}\rho^{|S|-1}\hat f(S)\hat g(S)
=\sum_{S\neq\emptyset}|S|\rho^{|S|-1}\hat f(S)\hat g(S).
\]
Therefore
\[
\frac{d}{d\rho}\tup{f,T_\rho g}
=\frac14\sum_{i=1}^n\tup{\partial_i f,T_\rho\partial_i g}.
\]
Integrating from $0$ to $1$ yields
\[
\Cov(f,g)=\tup{f,T_1g}-\tup{f,T_0g}=\frac14\sum_{i=1}^n\int_0^1\tup{\partial_i f,T_\rho\partial_i g}\,d\rho,
\]
since $\tup{f,T_1g}=\E[fg]$ and $\tup{f,T_0g}=\E[f]\E[g]$.
\end{proof}

We now prove the equivalence stated in the introduction.

\begin{lemma}\label{lem:conjecture-equivalence}
For increasing Boolean functions $f,g:\cube^n\to\cube$, inequality~\eqref{eq:spectral-conj} holds if and only if inequality~\eqref{eq:semigroup-convolution-conj} holds. Consequently, Conjecture~\ref{conj:spectral-FKKK} is equivalent to Conjecture~\ref{conj:semigroup-convolution}.
\end{lemma}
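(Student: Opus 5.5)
The plan is to show that both sides of \eqref{eq:semigroup-convolution-conj} are fixed positive multiples of the two sides of \eqref{eq:spectral-conj}, with the same multiplier $4$. The equivalence then follows by dividing through. No monotonicity is needed for this identity. Monotonicity only enters in the sense that the conjectures are stated for increasing Boolean functions.

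\emph{Left-hand side.} By Lemma~\ref{lem:semigroup-derivative-KMS},
\[
\sum_{i=1}^n\int_0^1\tup{\partial_i f,T_\rho\partial_i g}\,d\rho=4\Cov(f,g).
\]

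\emph{Right-hand side.} Fix $i$ and apply Fact~\ref{fact:convolution-fourier} on the cube $\cube^{[n]\setminus\{i\}}$. For $T\subseteq[n]\setminus\{i\}$ this gives
\[
\widehat{\partial_i f*\partial_i g}(T)=\widehat{\partial_i f}(T)\,\widehat{\partial_i g}(T).
\]
By Fact~\ref{fact:fourier-derivative} we have $\widehat{\partial_i f}(T)=-2\hat f(T\cup\{i\})$, and likewise for $g$. Hence
\[
\widehat{\partial_i f*\partial_i g}(T)=4\hat f(T\cup\{i\})\hat g(T\cup\{i\}).
\]
Parseval on $\cube^{[n]\setminus\{i\}}$ then gives
\[
\|\partial_i f*\partial_i g\|_2^2=16\sum_{S\ni i}\hat f(S)^2\hat g(S)^2.
\]
Summing over $i$, each nonempty $S$ is counted exactly $|S|$ times, so
\[
\sum_{i=1}^n\|\partial_i f*\partial_i g\|_2^2=16\sum_{S\ne\emptyset}|S|\hat f(S)^2\hat g(S)^2.
\]

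\emph{Conclusion.} Inequality \eqref{eq:semigroup-convolution-conj} therefore reads
\[
4\Cov(f,g)\ge16\sum_{S\ne\emptyset}|S|\hat f(S)^2\hat g(S)^2.
\]
Dividing by $4$ gives exactly \eqref{eq:spectral-conj}, and the steps reverse. This holds for each fixed pair $(f,g)$, so it holds over all increasing Boolean pairs, which gives the equivalence of the two conjectures.

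The only place needing care is bookkeeping. The derivative lives on $n-1$ coordinates, so the convolution and norms there are normalized on $\cube^{[n]\setminus\{i\}}$. One must check that Fact~\ref{fact:convolution-fourier} and Parseval are applied in that normalized setting. The factors of $-2$ square to $4$ in each function, giving $16$ in total. With these checks the constant comes out to match the $\frac14$ in Lemma~\ref{lem:semigroup-derivative-KMS}.
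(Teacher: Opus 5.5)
Your proposal is correct and is essentially the paper's argument. The paper likewise combines Fact~\ref{fact:convolution-fourier} with Fact~\ref{fact:fourier-derivative} to obtain $\frac14\sum_i\|\partial_i f*\partial_i g\|_2^2=4\sum_{S\neq\emptyset}|S|\hat f(S)^2\hat g(S)^2$ for arbitrary real-valued $f,g$, and then uses Lemma~\ref{lem:semigroup-derivative-KMS} to identify the left side of \eqref{eq:semigroup-convolution-conj} with $4\Cov(f,g)$; your explicit tracking of the constant $16$ and of the normalization on $\cube^{[n]\setminus\{i\}}$ just spells out what the paper compresses into one line.
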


\begin{proof}
Combining Fact~\ref{fact:convolution-fourier} with Fact~\ref{fact:fourier-derivative} and summing over $i\in[n]$, for $f,g:\cube^n\to\mathbb R$ we have
\begin{equation}\label{eq:derivative-convolution-identity}
\frac14\sum_{i=1}^n\|\partial_i f*\partial_i g\|_2^2
=
4\sum_{S\neq\emptyset}|S|\hat f(S)^2\hat g(S)^2.
\end{equation}
Therefore by Lemma~\ref{lem:semigroup-derivative-KMS},
\[
\Cov(f,g)\ge 4\sum_{S\neq\emptyset}|S|\hat f(S)^2\hat g(S)^2
\quad\Longleftrightarrow\quad
\sum_{i=1}^n\int_0^1\tup{\partial_i f,T_\rho\partial_i g}\,d\rho
\ge
\sum_{i=1}^n\|\partial_i f*\partial_i g\|_2^2.\tag*{\qedhere}
\]
\end{proof}

\section{Proofs of Theorem~\ref{thm:spectral-lower-bound} and Theorem~\ref{thm:averaged-rbb-young} }\label{sec:rbb-young}

This section proves Theorem~\ref{thm:averaged-rbb-young} and Theorem~\ref{thm:spectral-lower-bound}. We first recall the two inequalities in the form needed below. 

\begin{theorem}[Reverse Two-Function Hypercontractivity
Theorem~{\cite[Exercise~10.6]{ryanbook2014}}]
\label{fact:reverse-BB}
Let $n\ge0$, let $0<p,q<1$, and let $0\le\rho\le\sqrt{(1-p)(1-q)}$. If $f,g:\cube^n\to[0,\infty)$, then
\[
\tup{f,T_\rho g}\ge \|f\|_p\|g\|_q.
\]
\end{theorem}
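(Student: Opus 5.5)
The plan is to prove the statement by induction on $n$, using tensorization exactly as in the standard proof of (forward) two-function hypercontractivity. First I would reduce to the extremal value of $\rho$.

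\emph{Reduction to the extremal $\rho$.}
\begin{itemize}
\item If $\rho=0$, then $\tup{f,T_0g}=\E[f]\E[g]$. Since $\|f\|_p\le\|f\|_1$ for $p<1$ (power-mean monotonicity), we get $\E[f]\E[g]\ge\|f\|_p\|g\|_q$.
\item If $0<\rho<\sqrt{(1-p)(1-q)}$, set $p':=1-\rho^2/(1-q)\in(p,1)$, so that $\rho=\sqrt{(1-p')(1-q)}$. Then $\|f\|_{p'}\ge\|f\|_p$, again by monotonicity of $L^r$ quasi-norms in $r$.
\end{itemize}
It therefore suffices to treat $\rho^2=(1-p)(1-q)$.

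We may also assume $f,g>0$ pointwise. For general $f,g\ge0$, apply the strictly positive case to $f+\varepsilon$ and $g+\varepsilon$ and let $\varepsilon\to0$. Both sides are continuous in $\varepsilon$, including at zeros, since $p,q>0$. The case $n=0$ is trivial: both sides equal $fg$.

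\emph{Tensorization step.} Write $x=(x',a)$ with $a\in\cube$ the last coordinate, and let $f_a:=f(\cdot,a)$ and $g_b:=g(\cdot,b)$ on $\cube^{n-1}$. Because the noise kernel~\eqref{eq:noise-kernel} is a product, we have
\[
\tup{f,T_\rho g}=\E_{(a,b)}\bigl[\tup{f_a,T_\rho g_b}\bigr],
\]
where $(a,b)$ is a $\rho$-correlated pair of bits and the inner noise operator acts on $\cube^{n-1}$. The induction hypothesis gives $\tup{f_a,T_\rho g_b}\ge F(a)G(b)$, where $F(a):=\|f_a\|_p$ and $G(b):=\|g_b\|_q$. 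Hence $\tup{f,T_\rho g}\ge\tup{F,T_\rho G}$ on $\cube^1$. The one-dimensional case then gives $\tup{F,T_\rho G}\ge\|F\|_p\|G\|_q$. Finally, $\|F\|_p=\bigl(\E_a\E_{x'}f^p\bigr)^{1/p}=\|f\|_p$, and likewise $\|G\|_q=\|g\|_q$. This closes the induction.

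\emph{The two-point inequality, which is the main obstacle.} It remains to show, for positive $f,g$ on $\cube^1$ and $\rho=\sqrt{(1-p)(1-q)}$, that $\tup{f,T_\rho g}\ge\|f\|_p\|g\|_q$.

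\begin{itemize}
\item \emph{Normalization.} By homogeneity, write $f=1+s\chi$ and $g=1+t\chi$ with $|s|,|t|<1$, where $\chi=\chi_{\{1\}}$. The left side is then $1+\rho st$.
\item \emph{Splitting the inequality.} I would first reduce to a one-function statement. By reverse H\"older with the negative conjugate exponent $p^*=p/(p-1)$, we have $\E[fh]\ge\|f\|_p\|h\|_{p^*}$ for positive $h$. Applying this with $h=T_\rho g$, it suffices to prove Borell's reverse hypercontractive bound
\[
\|T_\rho g\|_{p^*}\ge\|g\|_q \quad\text{whenever } \rho\le\sqrt{(1-q)/(1-p^*)}.
\]
One checks that $1-p^*=1/(1-p)$, so this condition is exactly our $\rho^2\le(1-p)(1-q)$.
\item \emph{The one-function bound in dimension one.} I would follow Bonami's two-point argument adapted to exponents below $1$. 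Set $r=p^*$ and write $\|1+\rho t\chi\|_r^r=\tfrac12\bigl[(1+\rho t)^r+(1-\rho t)^r\bigr]$. Expand both $\|1+\rho t\chi\|_r$ and $\|1+t\chi\|_q$ as power series in $t^2$ via the binomial series, and compare coefficients. Alternatively, one can show that $\log\|1+\rho t\chi\|_r-\log\|1+t\chi\|_q$ is nonnegative by differentiating in $t$.
\end{itemize}

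The delicate points are the signs of the binomial coefficients for $r<0<q<1$, and the fact that the $t^2$ coefficients match with equality exactly at $\rho^2=(1-q)/(1-r)$. If the coefficientwise comparison fails at higher orders, I would instead use Borell's original route. That route first proves the case $q\to0$ or uses a change of variables $\rho\mapsto e^{-t}$, differentiates $\|T_{e^{-t}}g\|_{r(t)}$ in $t$, and reduces to a reverse log-Sobolev inequality on the two-point space, which is an elementary one-variable inequality.
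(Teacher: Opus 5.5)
The paper gives no proof of this statement; it quotes it from O'Donnell's book and uses it as a black box. Your reductions are sound and standard: the passage to the extremal $\rho$ via $p'=1-\rho^2/(1-q)$, the perturbation to $f,g>0$, the tensorization over a $\rho$-correlated pair of bits, and the reverse H\"older step. Using $1-p^*=1/(1-p)$, that last step reduces the one-bit case to Borell's inequality $\|T_\rho g\|_{p^*}\ge\|g\|_q$ at $\rho^2=(1-q)/(1-p^*)$. The gap is that this one-bit Borell inequality, the only step with real analytic content, is never proved. You propose a coefficient comparison, explicitly allow that it may fail, and offer only a sketch of a semigroup/log-Sobolev fallback.

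Your worry is justified. Put $r=p^*$, $a=1-r=1/(1-p)$, $b=1-q$, so $\rho^2=b/a$, and $g=1+t\chi$ with $|t|<1$. Then $\E[(T_\rho g)^r]=1+Y$ with $Y=\sum_{k\ge1}\binom{r}{2k}(\rho t)^{2k}\ge0$, and $\E[g^q]=1-\sum_{k\ge1}\bigl|\binom{q}{2k}\bigr|t^{2k}$. Since $q/r<0$, convexity gives $(1+Y)^{q/r}\ge1+\frac{q}{r}Y$. So it suffices that $\frac{q}{|r|}\binom{r}{2k}\rho^{2k}\le\bigl|\binom{q}{2k}\bigr|$ for all $k\ge1$. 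This is an equality at $k=1$. For $k\ge2$ the ratio of the two sides is $\prod_{i=1}^{k-1}\frac{b(a+2i-1)(a+2i)}{a(b+2i-1)(b+2i)}$, and the $i$-th factor is at most $1$ if and only if $ab\le 2i(2i-1)$. Hence the $t^4$ comparison fails as soon as $ab=(1-q)/(1-p)>2$, for instance at $p=0.9$, $q=0.1$.

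Expanding the norms themselves, as you literally wrote, is no better. For $p=0.9$ the moment $\E[(1+\rho t\chi)^{-9}]$ vanishes at $t=\pm i\tan(\pi/18)/\rho\approx\pm0.59\,i$. So the power series of $\|1+\rho t\chi\|_{p^*}$ does not even converge on all of $|t|<1$.

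The missing idea is cheap. The statement is symmetric under $(f,p)\leftrightarrow(g,q)$, because $\tup{f,T_\rho g}=\tup{g,T_\rho f}$. So you may assume $p\le q$ before applying reverse H\"older. Then $ab\le1$, every factor above is at most $1$, and the two-point inequality, and with it your induction, is complete. Alternatively, if you are willing to cite Borell's reverse hypercontractivity on $\cube^n$, reverse H\"older alone gives the theorem in one line and the tensorization is unnecessary.
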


\begin{fact}[Young's convolution inequality~\cite{beltran2025optimal}]\label{fact:young-convolution}
Let $n\ge0$, and let $1\le p,q,r\le\infty$ satisfy $1+\frac1r=\frac1p+\frac1q$. Then, for all $f,g:\cube^n\to\mathbb R$,
\[
\|f*g\|_r\le \|f\|_p\|g\|_q.
\]
\end{fact}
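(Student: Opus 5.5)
We would prove Young's inequality on $\mathbb F_2^n$ directly, by the classical three-factor H\"older argument. The only group-specific input is translation invariance of the uniform probability measure: for every $z$, the map $x\mapsto x\oplus z$ is a measure-preserving bijection of $\cube^n$. Because $\E$ is normalized to be a probability measure, no constant appears, and we expect to get exactly $\|f*g\|_r\le\|f\|_p\|g\|_q$. Throughout we may replace $f,g$ by $|f|,|g|$, since $|f*g|\le |f|*|g|$ pointwise; so assume $f,g\ge0$.

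\emph{Degenerate exponents.} If $r=\infty$, then $\frac1p+\frac1q=1$. For each fixed $z$, H\"older's inequality together with translation invariance gives $(f*g)(z)\le\|f\|_p\|g(\cdot\oplus z)\|_q=\|f\|_p\|g\|_q$. If $p=\infty$ (resp.\ $q=\infty$), the exponent relation forces $q=1$, $r=\infty$ (resp.\ $p=1$, $r=\infty$), which is the case just treated. So from now on assume $1\le p,q\le r<\infty$. These inequalities follow from $\frac1p=1+\frac1r-\frac1q\ge\frac1r$ and its symmetric counterpart.

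\emph{Main step.} For fixed $z$, factor the integrand as
\[
f(x)g(x\oplus z)=\bigl(f(x)^p g(x\oplus z)^q\bigr)^{1/r}\cdot f(x)^{1-p/r}\cdot g(x\oplus z)^{1-q/r}.
\]
We then apply the generalized H\"older inequality with exponents $r$, $\bigl(\tfrac1p-\tfrac1r\bigr)^{-1}$ and $\bigl(\tfrac1q-\tfrac1r\bigr)^{-1}$. Their reciprocals sum to $\tfrac1r+\tfrac1p+\tfrac1q-\tfrac2r=1$ by the hypothesis $1+\tfrac1r=\tfrac1p+\tfrac1q$. If $p=r$ or $q=r$, the corresponding factor is the constant $1$ and is simply dropped. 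This yields
\[
(f*g)(z)\le \Bigl(\E_x\bigl[f(x)^p g(x\oplus z)^q\bigr]\Bigr)^{1/r}\,\|f\|_p^{1-p/r}\,\|g\|_q^{1-q/r},
\]
where translation invariance has been used to identify $\E_x[g(x\oplus z)^q]=\|g\|_q^q$. Raising this to the power $r$ and averaging over $z$, Fubini and translation invariance give $\E_z\E_x[f(x)^pg(x\oplus z)^q]=\|f\|_p^p\|g\|_q^q$. Hence
\[
\|f*g\|_r^r\le\|f\|_p^p\|g\|_q^q\,\|f\|_p^{r-p}\|g\|_q^{r-q}=\|f\|_p^r\|g\|_q^r,
\]
which is the claim.

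The only delicate point is the exponent bookkeeping: checking that the three H\"older exponents are valid, i.e.\ lie in $[1,\infty]$ with reciprocals summing to one, and handling the boundary cases $p=r$, $q=r$ and $r=\infty$ cleanly. Everything else is routine because the measure is a finite group's normalized Haar measure, so no normalization constants enter.
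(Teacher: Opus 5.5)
Your argument is correct. It is the classical three-factor H\"older proof of Young's inequality, and the exponent bookkeeping checks out. The auxiliary exponents are exactly $q'$ and $p'$, and $(1-p/r)\,q'=p$, so the middle factor contributes $\|f\|_p^{1-p/r}$. An auxiliary exponent is infinite exactly when $p=r$ or $q=r$, and these are precisely the cases in which you drop the corresponding factor. You also work directly with the paper's $g(x\oplus z)$ form of the convolution, using only translation invariance of the uniform measure.

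The paper gives no proof here at all. It quotes the inequality from the literature and uses it once, with $p=q=\frac43$, $r=2$, in the proof of Theorem~\ref{thm:averaged-rbb-young}. Your route buys self-containedness and a transparent check that the constant is exactly $1$ under the paper's normalized conventions (probability measure in both the convolution and the norms). That is what the application needs: any dimension-dependent constant there would destroy the factor $\frac12$ in Theorem~\ref{thm:averaged-rbb-young}.

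For the single instance the paper actually uses, there is also a Fourier-side alternative. By Parseval, Cauchy--Schwarz and Hausdorff--Young,
\[
\|f*g\|_2^2=\sum_S\hat f(S)^2\hat g(S)^2\le\|\hat f\|_{\ell^4}^2\|\hat g\|_{\ell^4}^2\le\|f\|_{4/3}^2\|g\|_{4/3}^2 .
\]
That route rests on Riesz--Thorin interpolation and is less elementary than your H\"older argument.
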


\begin{proof}[Proof of Theorem~\ref{thm:averaged-rbb-young}]
If $\E[f]\E[g]=0$, then one of $f,g$ vanishes identically, and both sides of the desired inequality are zero. Hence assume $\E[f]\E[g]>0$.

For $0<\rho<1$, apply Theorem~\ref{fact:reverse-BB} with $p=q=1-\rho$. Since $0\le f,g\le1$, we have $f^{1-\rho}\ge f$ and $g^{1-\rho}\ge g$, and therefore
\[
\tup{f,T_\rho g}\ge \E[f]^{\frac{1}{1-\rho}}\cdot\E[g]^{\frac{1}{1-\rho}}.
\]
The endpoint $\rho=0$ follows by continuity, and the endpoint $\rho=1$ has measure zero and therefore does not affect the integral. Consequently,
\[
\int_0^1\tup{f,T_\rho g}\,d\rho
\ge
\int_0^1(\E[f]\E[g])^{\frac{1}{1-\rho}}\,d\rho.
\]
Write $\E[f]\E[g]=e^{-\lambda}$ with $\lambda\ge0$. With the change of variables $s=(1-\rho)^{-1}$,
\[
\int_0^1(\E[f]\E[g])^{\frac{1}{1-\rho}}\,d\rho
=
\int_1^\infty e^{-\lambda s}s^{-2}\,ds\ge
\int_1^\infty e^{-\lambda s}e^{-2(s-1)}\,ds
=
\frac{e^{-\lambda}}{\lambda+2},
\]
since $\log s\le s-1$ for $s\ge1$. Finally, $2e^{\frac{\lambda}{2}}\ge \lambda+2$ for $\lambda\ge0$, and so
\begin{equation}\label{eq:rbb-integrated-lower}
\int_0^1\tup{f,T_\rho g}\,d\rho
\ge
\frac12(\E f)^{\frac{3}{2}}(\E g)^{\frac{3}{2}}.
\end{equation}

On the other hand, Fact~\ref{fact:young-convolution}, with $p=q=\frac{4}{3}$ and $r=2$, gives
\[
\|f*g\|_2\le \|f\|_{4/3}\|g\|_{4/3}.
\]
Since $0\le f,g\le1$, $\|f\|_{4/3}^2=(\E f^{\frac{4}{3}})^{\frac{3}{2}}\le(\E f)^{\frac{3}{2}}$, and similarly $\|g\|_{4/3}^2\le(\E g)^{\frac{3}{2}}$. Therefore
\begin{equation}\label{eq:young-convolution-upper}
\|f*g\|_2^2\le(\E f)^{\frac{3}{2}}(\E g)^{\frac{3}{2}}.
\end{equation}
Combining~\eqref{eq:rbb-integrated-lower} and~\eqref{eq:young-convolution-upper} yields the desired inequality.
\end{proof}

We next derive Theorem~\ref{thm:spectral-lower-bound}.

\begin{proof}[Proof of Theorem~\ref{thm:spectral-lower-bound}]
Fix $i\in[n]$. Since $f$ and $g$ are increasing Boolean functions, $\partial_i f$ and $\partial_i g$ are $\cube$-valued functions on $\cube^{[n]\setminus\{i\}}$. Applying Theorem~\ref{thm:averaged-rbb-young} to $(\partial_i f,\partial_i g)$ gives
\[
\int_0^1\tup{\partial_i f,T_\rho\partial_i g}\,d\rho
\ge
\frac12\|\partial_i f*\partial_i g\|_2^2.
\]
Combining Lemma~\ref{lem:semigroup-derivative-KMS} with~\eqref{eq:derivative-convolution-identity} gives the desired bound.
\end{proof}

\section{Sharp pointwise theorem}\label{sec:sharp-comparison}

We now prove Theorem~\ref{thm:general-noise-comparison} in two steps. First we prove the half-noise inequality, which is the basic comparison at $\rho=1/2$. We then use tensorization and kernel comparison to obtain the sharp constant for every $\rho\in[0,1]$. After that, we integrate the theorem to prove Corollary~\ref{cor:integrated-noise-bound}.

\begin{lemma}[The half-noise inequality]\label{lem:half-noise}
Let $f,g:\cube^n\to[0,1]$. Then
\[
    \tup{f,T_{\frac{1}{2}}g}\ge \|f*g\|_2^2.
\]
\end{lemma}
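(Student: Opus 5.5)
The plan is to induct on the dimension $n$, peeling off the last coordinate, in the spirit of tensorization proofs of hypercontractivity. For $n=0$ the functions are numbers $f,g\in[0,1]$, and the claim reads $fg\ge f^2g^2$, which holds because $0\le fg\le 1$. The same base case covers $n\ge1$ once the inductive step is in place. Throughout, the inductive hypothesis will be applied to all four pairs of restrictions, so it is important that restrictions of $[0,1]$-valued functions are again $[0,1]$-valued.

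For the inductive step, write $f_b(x):=f(x,b)$ and $g_b(x):=g(x,b)$ for $x\in\cube^{n-1}$ and $b\in\cube$, and let $T$ denote $T_{1/2}$ on $\cube^{n-1}$. Set
\[
A_{ab}:=\tup{f_a,Tg_b},\qquad B_{ab}:=\|f_a*g_b\|_2^2 .
\]
Since $T_{1/2}$ tensorizes, and in the last coordinate it keeps the bit with probability $3/4$ and flips it with probability $1/4$, we get
\[
\tup{f,T_{\frac12}g}=\tfrac38(A_{00}+A_{11})+\tfrac18(A_{01}+A_{10}).
\]
On the convolution side, $(f*g)(z,0)=\tfrac12(f_0*g_0+f_1*g_1)(z)$ and $(f*g)(z,1)=\tfrac12(f_0*g_1+f_1*g_0)(z)$. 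Hence
\[
\|f*g\|_2^2=\tfrac18\Bigl(\textstyle\sum_{a,b}B_{ab}+2\tup{f_0*g_0,f_1*g_1}+2\tup{f_0*g_1,f_1*g_0}\Bigr).
\]

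The key observation, and the only nontrivial step, concerns the two cross terms. By Fact~\ref{fact:convolution-fourier} and Parseval, both cross terms equal
\[
X:=\sum_S\hat f_0(S)\hat f_1(S)\hat g_0(S)\hat g_1(S).
\]
Bounding the first one by Cauchy--Schwarz and AM--GM gives
\[
X\le\sqrt{B_{00}B_{11}}\le\tfrac12(B_{00}+B_{11}).
\]
Since $X$ appears with a nonnegative coefficient, this upper bound may be used even when $X<0$. It yields
\[
\|f*g\|_2^2\le\tfrac38(B_{00}+B_{11})+\tfrac18(B_{01}+B_{10}).
\]
Every coefficient here is nonnegative, so the inductive hypothesis $B_{ab}\le A_{ab}$, applied to all four pairs $(f_a,g_b)$, gives exactly $\|f*g\|_2^2\le\tup{f,T_{1/2}g}$.

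I expected the main obstacle to be the cross terms. If $\tup{f_0*g_1,f_1*g_0}$ is bounded naively by $\tfrac12(B_{01}+B_{10})$, the off-diagonal coefficient on the right becomes $1/4$, while the left side only supplies $1/8$, and the induction fails. The remedy is the Fourier identity showing that both cross terms coincide, so the whole cross contribution can be charged to the diagonal terms $B_{00},B_{11}$. The diagonal terms carry the larger weight $3/8$ on the noise side, and this is precisely where the choice $\rho=1/2$ (keep probability $3/4$) makes the coefficients match.
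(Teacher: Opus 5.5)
Your proof is correct and is essentially the paper's argument. The paper also inducts on the last coordinate, applies the hypothesis to the four restricted pairs with weights $\tfrac38,\tfrac38,\tfrac18,\tfrac18$ (encoded as random signs $\varepsilon,\eta$ with $\E[\varepsilon\eta]=\tfrac12$), identifies the averaged left side with $\tup{f,T_{\frac12}g}$, and discards the nonnegative term $\sum_R(\hat a(R)\hat d(R)+\hat b(R)\hat c(R))^2=\tfrac14\|f_0*g_0-f_1*g_1\|_2^2$, which is exactly the slack in your bound $X\le\tfrac12(B_{00}+B_{11})$; the two computations differ only in notation.
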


\begin{proof}
We prove the statement by induction on $n$.

For $n=0$, the cube consists of a single point. Thus $f,g\in[0,1]$ are constants. Note that $\tup{f,T_{\frac{1}{2}}g}=fg$ and $\|f*g\|_2^2=f^2g^2$. Since $0\le f,g\le1$, we have $fg\ge f^2g^2$. This proves the base case.

Assume now that the result is known in dimension $n-1$. For any $x\in\{0,1\}^n$, write $x=(x',z)$ where $x'\in\{0,1\}^{n-1}$ and $z\in\{0,1\}$. Decompose $f$ and $g$ with respect to the last coordinate:
\[
f(x',z)=a(x')+ (-1)^z\cdot b(x'),\quad g(x',z)=c(x')+(-1)^z\cdot d(x'),
\]
where $a,b,c,d:\{0,1\}^{n-1}\to\mathbb R$ satisfy
$
a(x')=\frac{f(x',0)+f(x',1)}2, b(x')=\frac{f(x',0)-f(x',1)}2,c(x')=\frac{g(x',0)+g(x',1)}2,d(x')=\frac{g(x',0)-g(x',1)}2.
$
Hence $a+b=f(x',0)$, $a-b=f(x',1)$ and similarly $c+d=g(x',0)$, $c-d=g(x',1)$. Therefore, for every $\varepsilon,\eta\in\{\pm1\}$, the functions $a+\varepsilon b$ and $c+\eta d$ take values in $[0,1]$, since they are restrictions of $f$ and $g$.

Choose random signs $\varepsilon,\eta\in\{\pm1\}$ such that $\E[\varepsilon]=0$, $\E[\eta]=0$, and $\E[\varepsilon\eta]=\frac12$; for instance, take
\[
\Prb(\varepsilon=\eta=1)=\Prb(\varepsilon=\eta=-1)=\frac38,
\qquad
\Prb(\varepsilon=1,\eta=-1)=\Prb(\varepsilon=-1,\eta=1)=\frac18.
\]
By the induction hypothesis, for every fixed choice of $(\varepsilon,\eta)$,
\[
\tup{a+\varepsilon b,T_{\frac{1}{2}}(c+\eta d)}\ge\left\|(a+\varepsilon b)*(c+\eta d)\right\|_2^2.
\]
Averaging over $(\varepsilon,\eta)$, we obtain
\begin{equation}\label{eq:induction average condition}
\E_{\varepsilon,\eta}\left[\tup{a+\varepsilon b,T_{\frac{1}{2}}(c+\eta d)}\right]\ge\E_{\varepsilon,\eta}\left[\left\|(a+\varepsilon b)*(c+\eta d)\right\|_2^2\right].
\end{equation}

We now compute both sides of~\eqref{eq:induction average condition}.

First,
\[
\begin{aligned}
\E_{\varepsilon,\eta}\left[\tup{a+\varepsilon b,T_{\frac{1}{2}}(c+\eta d)}\right]
&=\E_{\varepsilon,\eta}\left[\tup{a,T_{\frac{1}{2}}c}+\eta\tup{a,T_{\frac{1}{2}}d}+\varepsilon\tup{b,T_{\frac{1}{2}}c}+\varepsilon\eta\tup{b,T_{\frac{1}{2}}d}\right]\\
&=\tup{a,T_{\frac{1}{2}}c}+\frac12\tup{b,T_{\frac{1}{2}}d}.
\end{aligned}
\]
\begin{claim}
$\tup{a,T_{\frac{1}{2}}c}+\frac12\tup{b,T_{\frac{1}{2}}d}=\tup{f,T_{\frac{1}{2}}g}$.
\end{claim}
Indeed, in the last coordinate,
$T_{\frac{1}{2}}$ multiplies the character $(-1)^z$ by $\frac{1}{2}$. Hence
$$
T_{\frac{1}{2}}g(x',z)=T_{\frac{1}{2}}c(x')+\frac{1}{2}(-1)^z T_{\frac{1}{2}}d(x'),
$$
where on the right-hand side $T_{\frac{1}{2}}$ is the noise operator on $\{0,1\}^{n-1}$. Therefore
\[
\begin{aligned}
\tup{f,T_{\frac{1}{2}}g}&=\tup{a+(-1)^zb,T_{\frac{1}{2}}c+\frac{1}{2}(-1)^z T_{\frac{1}{2}}d}=\tup{a+(-1)^zb,T_{\frac{1}{2}}c}+\frac{1}{2}\tup{b,T_{\frac{1}{2}}d}\\
&=\tup{a,T_{\frac{1}{2}}c}+\frac12\tup{b,T_{\frac{1}{2}}d},
\end{aligned}
\]
because the mixed terms containing a single factor $(-1)^z$ vanish after averaging over the last coordinate. Hence,
\begin{equation}\label{eq:LHSremaining}
\E_{\varepsilon,\eta}\left[\tup{a+\varepsilon b,T_{\frac{1}{2}}(c+\eta d)}\right]= \tup{f,T_{\frac{1}{2}}g}.  
\end{equation}

Next we compute the right-hand side of~\eqref{eq:induction average condition}. Fix $R\subseteq[n-1]$ and note that
$$
\widehat{a+\varepsilon b}(R)=\hat a(R)+\varepsilon\hat b(R), \quad \widehat{c+\eta d}(R)=\hat c(R)+\eta\hat d(R).
$$
Therefore
\[
\begin{aligned}
&\E_{\varepsilon,\eta}\left[\left\|(a+\varepsilon b)*(c+\eta d)\right\|_2^2\right]=\E_{\varepsilon,\eta}\left[\sum_{R\subseteq[n-1]}\widehat{a+\varepsilon b}(R)^2\widehat{c+\eta d}(R)^2\right]\\
&=\sum_{R\subseteq[n-1]}\E_{\varepsilon,\eta}\left[(\hat a(R)+\varepsilon\hat b(R))^2\cdot(\hat c(R)+\eta\hat d(R))^2\right]\\
&=\sum_{R\subseteq[n-1]}\left[\hat a(R)^2\hat c(R)^2+\hat a(R)^2\hat d(R)^2+\hat b(R)^2\hat c(R)^2+\hat b(R)^2\hat d(R)^2+2\hat a(R)\hat b(R)\hat c(R)\hat d(R)\right]\\
&=\sum_{R\subseteq[n-1]}\left[\hat a(R)^2\hat c(R)^2+\hat b(R)^2\hat d(R)^2+(\hat a(R)\hat d(R)+\hat b(R)\hat c(R))^2\right]\\
&\ge\sum_{R\subseteq[n-1]}\left[\hat a(R)^2\hat c(R)^2+\hat b(R)^2\hat d(R)^2\right].
\end{aligned}
\]
It remains to identify the last sum. Since $f=a+(-1)^zb$ and $g=c+(-1)^zd$, the Fourier coefficients of $f$ and $g$ are given by $\hat f(R)=\hat a(R),\hat g(R)=\hat c(R)$ for $R\subseteq[n-1]$, and $\hat f(R\cup\{n\})=\hat b(R),\hat g(R\cup\{n\})=\hat d(R)$. Consequently,
$$
\|f*g\|_2^2=\sum_{S\subseteq[n]}\hat f(S)^2\hat g(S)^2=\sum_{R\subseteq[n-1]}\hat a(R)^2\hat c(R)^2+\sum_{R\subseteq[n-1]}\hat b(R)^2\hat d(R)^2.
$$
Thus
\begin{equation}\label{eq:RHSremaining}
\E_{\varepsilon,\eta}\left[\left\|(a+\varepsilon b)*(c+\eta d)\right\|_2^2\right]\ge\|f*g\|_2^2.
\end{equation}

Combining~\eqref{eq:induction average condition}, \eqref{eq:LHSremaining},  and~\eqref{eq:RHSremaining}, we obtain
\[
\begin{aligned}
\tup{f,T_{\frac{1}{2}}g}
&=\E_{\varepsilon,\eta}\left[\tup{a+\varepsilon b,T_{\frac{1}{2}}(c+\eta d)}\right]\\
&\ge\E_{\varepsilon,\eta}\left[\left\|(a+\varepsilon b)*(c+\eta d)\right\|_2^2\right]\\
&\ge\|f*g\|_2^2.
\end{aligned}
\]
This completes the induction and proves the lemma.
\end{proof}

\begin{proof}[Proof of Theorem~\ref{thm:general-noise-comparison}]
If $n=0$, the cube has one point and $T_\rho$ is the identity for every $\rho$. Thus $\tup{f,T_\rho g}=fg$ and $\|f*g\|_2^2=f^2g^2$ for constants $f,g\in[0,1]$. The largest admissible constant is therefore $1$, with equality attained at $f=g=1$. We now assume $n\ge1$.

We first prove the lower bounds. The case $0\le \rho\le\frac12$ follows from Lemma~\ref{lem:half-noise} by an averaging argument.

For $\eta=(\eta_1,\ldots,\eta_n)\in[0,1]^n$, let $T_\eta$ denote the product noise operator
\[
    T_\eta=T^1_{\eta_1}T^2_{\eta_2}\cdots T^n_{\eta_n},
\]
where $T^i_{\eta_i}$ applies the one-coordinate noise operator in the $i$-th coordinate.

\begin{claim}\label{claim:anisotropic}
Let $J\subseteq[n]$, and define
\[
    \eta_i=
    \begin{cases}
        \frac12, & i\in J,\\
        0, & i\notin J.
    \end{cases}
\]
Then
\[
    \tup{f,T_\eta g}\ge \|f*g\|_2^2.
\]
\end{claim}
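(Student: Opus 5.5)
The plan is to reduce Claim~\ref{claim:anisotropic} to Lemma~\ref{lem:half-noise} applied in dimension $|J|$, by integrating out the coordinates outside $J$. Write $x=(u,v)$ with $u\in\cube^{J}$ and $v\in\cube^{[n]\setminus J}$. Since $\eta_i=0$ for $i\notin J$, the operator $T_\eta$ replaces the $v$-coordinates by fresh uniform ones and applies $T_{1/2}$ to the $u$-coordinates. Define the marginals $F(u):=\E_v[f(u,v)]$ and $G(u):=\E_v[g(u,v)]$, which are functions $\cube^J\to[0,1]$. Then
\[
\tup{f,T_\eta g}=\E_{u}\bigl[F(u)\,(T_{1/2}G)(u)\bigr]=\tup{F,T_{\frac12}G},
\]
where $T_{1/2}$ on the right is the noise operator on $\cube^{J}$. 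This can be checked on characters: $T_\eta\chi_S=\frac12^{|S|}\chi_S$ if $S\subseteq J$ and $0$ otherwise, so $\tup{f,T_\eta g}=\sum_{S\subseteq J}2^{-|S|}\hat f(S)\hat g(S)$. The same expression is $\tup{F,T_{1/2}G}$, because $\hat F(S)=\hat f(S)$ for $S\subseteq J$.

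Lemma~\ref{lem:half-noise} on $\cube^J$ (the case $J=\emptyset$ being the $n=0$ base case) gives
\[
\tup{F,T_{\frac12}G}\ge \|F*G\|_2^2=\sum_{S\subseteq J}\hat f(S)^2\hat g(S)^2.
\]
This is not yet $\|f*g\|_2^2=\sum_{S\subseteq[n]}\hat f(S)^2\hat g(S)^2$, which also contains the terms with $S\not\subseteq J$. Passing to marginals therefore loses too much, and I expect this to be the main obstacle. The remedy is to condition instead of average, mimicking the induction in Lemma~\ref{lem:half-noise}.

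For the coordinates $i\notin J$ I would run the same one-coordinate step as in that lemma, now with noise parameter $0$. Decompose $f=a+(-1)^zb$ and $g=c+(-1)^zd$ in a coordinate $i\notin J$. Choose random signs $\varepsilon,\eta$ with $\E\varepsilon=\E\eta=0$ and $\E[\varepsilon\eta]=0$, for instance independent signs. Then
\[
\E_{\varepsilon,\eta}\tup{a+\varepsilon b,T(c+\eta d)}=\tup{a,Tc}=\tup{f,Tg},
\]
since $T$ kills the $(-1)^z$ component. On the right side the same expansion as in the lemma gives
\[
\sum_R\bigl[\hat a^2\hat c^2+\hat a^2\hat d^2+\hat b^2\hat c^2+\hat b^2\hat d^2\bigr]\ge\sum_R\bigl[\hat a^2\hat c^2+\hat b^2\hat d^2\bigr],
\]
because the cross term $2\hat a\hat b\hat c\hat d$ now vanishes and all other terms are nonnegative. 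So the same inductive step goes through with parameter $0$.

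The full proof is an induction on $n$ in which each coordinate is peeled off using either the $\rho=\frac12$ step (for $i\in J$) or the $\rho=0$ step (for $i\notin J$). The induction hypothesis is the claim itself for the restricted pattern on the remaining coordinates. The base case $n=0$ is $fg\ge f^2g^2$. The only point to verify is that the one-coordinate identity for the left side, from the Claim inside Lemma~\ref{lem:half-noise}, holds for $T^n_0$ as well, which is immediate from the character computation.
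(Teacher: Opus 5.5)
Your proof is correct, but it is organized differently from the paper's. The paper uses Lemma~\ref{lem:half-noise} as a black box. It conditions separately on the $J^c$-coordinates of $x$ and of the noisy copy, writing $\tup{f,T_\eta g}=\E_{z,z'}\tup{f_{J^c\to z},T_{\frac12}g_{J^c\to z'}}_J$. It then applies the lemma in dimension $|J|$ to each pair of sections. Finally, it recovers the full convolution energy with one Jensen step, $\|f*g\|_2^2\le\E_{z,z''}\|f_{J^c\to z}*g_{J^c\to z''}\|_{2,J}^2$.

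You instead reopen the induction and add a second one-coordinate step with independent signs. The two arguments are closer than they look. For a coordinate $i\notin J$, set $\varepsilon=(-1)^{z}$ and $\eta=(-1)^{z''}$ with $z,z''$ independent. Then your independent-sign averages of both sides are exactly the paper's conditioned averages in that coordinate. Your inequality $(\hat a^2+\hat b^2)(\hat c^2+\hat d^2)\ge\hat a^2\hat c^2+\hat b^2\hat d^2$ is the Fourier form of the paper's Jensen step, applied one coordinate at a time.

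The paper's version is more modular, since it needs no second induction. Yours is more flexible. Choose $\E[\varepsilon\eta]=\rho_i$ for any $\rho_i\in[0,\frac12]$. The left side then becomes $\tup{a,T'c}+\rho_i\tup{b,T'd}$, which is $\tup{f,Tg}$ for noise $\rho_i$ in that coordinate. The cross term on the right becomes $4\rho_i\hat a\hat b\hat c\hat d\ge-2|\hat a\hat d|\,|\hat b\hat c|$, and this is still absorbed by $\hat a^2\hat d^2+\hat b^2\hat c^2$. So the same induction gives $\tup{f,T_\rho g}\ge\|f*g\|_2^2$ directly for all $\rho\le\frac12$, even coordinate-dependent ones. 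That bypasses both this claim and the convex decomposition $T_\rho=(1-2\rho)T_0+2\rho T_{\frac12}$.

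Your opening attempt via marginals is correctly diagnosed as a dead end. Averaging before applying the lemma is Jensen in the wrong direction, and it discards the terms with $S\not\subseteq J$.
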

For each $z\in\{0,1\}^{J^c}$, define the restrictions
$$
f_{J^c\to z}(y)=f(x_J=y,x_{J^c}=z),\quad g_{J^c\to z}(y)=g(x_J=y,x_{J^c}=z).
$$
Here and below, convolution on sections is taken inside the cube $\{0,1\}^J$.

Since $T_\eta$ applies $T_{\frac12}$ in the coordinates of $J$ and complete resampling in the coordinates of $J^c$, we have
$$
\tup{f,T_\eta g}=\E_{z,z'\in\{0,1\}^{J^c}}\left[\tup{f_{J^c\to z},T_{\frac12}g_{J^c\to z'}}_{J}\right],
$$
where $\tup{\cdot,\cdot}_{J}$ denotes the normalized inner product on $\{0,1\}^J$.

By Lemma~\ref{lem:half-noise}, for every fixed $z,z'\in\{0,1\}^{J^c}$,
\[
\tup{f_{J^c\to z},T_{\frac12}g_{J^c\to z'}}_{J}\ge\|f_{J^c\to z}*g_{J^c\to z'}\|_{2,J}^2.
\]
Averaging over $z,z'$ gives
\begin{equation}\label{eq:anisotropic-lower}
\tup{f,T_\eta g}\ge\E_{z,z'\in\{0,1\}^{J^c}}\left[\|f_{J^c\to z}*g_{J^c\to z'}\|_{2,J}^2\right].
\end{equation}

On the other hand, write a point of the full cube as $(y,\delta)\in\{0,1\}^J\times\{0,1\}^{J^c}$, where $\delta$ is the $J^c$-coordinate of the convolution shift. Then
\[
(f*g)(y,\delta)
=
\E_{z\in\{0,1\}^{J^c}}
\left[(f_{J^c\to z}*g_{J^c\to z\oplus\delta})(y)\right].
\]
By Jensen's inequality,
\[
\begin{aligned}
\|f*g\|_2^2&=\E_{y\in\{0,1\}^{J},\delta\in\{0,1\}^{J^c}}\left[
\left(
\E_{z\in\{0,1\}^{J^c}}
\left[(f_{J^c\to z}*g_{J^c\to z\oplus\delta})(y)\right]
\right)^2
\right]\\
&\le
\E_{y\in\{0,1\}^{J},\delta\in\{0,1\}^{J^c},z\in\{0,1\}^{J^c}}
\left[
\left((f_{J^c\to z}*g_{J^c\to z\oplus\delta})(y)\right)^2
\right]\\
&=
\E_{z,z''\in\{0,1\}^{J^c}}
\left[\|f_{J^c\to z}*g_{J^c\to z''}\|_{2,J}^2\right],
\end{aligned}
\]
where $z''=z\oplus\delta$ and the addition $\oplus$ is coordinatewise addition modulo $2$. Combining this with~\eqref{eq:anisotropic-lower} proves Claim~\ref{claim:anisotropic}.

We now prove the case $0\le \rho\le\frac12$. In one coordinate,
\[
    T_\rho=(1-2\rho)T_0+2\rho T_{\frac12}.
\]
Tensorizing this identity, $T_\rho$ is a convex combination of the operators $T_\eta$ appearing in
Claim~\ref{claim:anisotropic}. More explicitly,
\[
T_\rho=\E_J[T_{\eta^J}],
\]
where $J$ is a random subset of $[n]$ obtained by putting each coordinate in $J$ independently with probability $2\rho$, and
\[
\eta_i^J=
    \begin{cases}
        \frac12, & i\in J,\\
        0, & i\notin J.
    \end{cases}
\]
Therefore,
$$
\tup{f,T_\rho g}=\E_{J\sim\mu_{2\rho}([n])}\left[\tup{f,T_{\eta^J}g}\right]\ge \|f*g\|_2^2,
$$
where the last inequality follows from Claim~\ref{claim:anisotropic}. This proves $c_{\rho,n}\ge 1$ for $0\le \rho\le\frac12$.

Next assume that $\frac12<\rho<1$.
Let $K_\rho(x,y)$ be the Markov kernel of $T_\rho$, so that
\[
    T_\rho g(x)=\sum_{y\in\{0,1\}^n}K_\rho(x,y)g(y).
\]
For one coordinate,
\[
    K_\rho(x_i,y_i)=
    \begin{cases}
        \dfrac{1+\rho}{2}, & x_i=y_i,\\[6pt]
        \dfrac{1-\rho}{2}, & x_i\ne y_i.
    \end{cases}
\]
Thus, for one coordinate,
\[
    \frac{K_{\frac12}(x_i,y_i)}{K_\rho(x_i,y_i)}
    =
    \begin{cases}
        \dfrac{3}{2(1+\rho)}, & x_i=y_i,\\[8pt]
        \dfrac{1}{2(1-\rho)}, & x_i\ne y_i.
    \end{cases}
\]
Since $\rho\ge\frac12$, we have $\frac{3}{2(1+\rho)}\le \frac{1}{2(1-\rho)}$. Therefore, coordinatewise,
\[
    K_{\frac12}(x_i,y_i)\le \frac{1}{2(1-\rho)}K_\rho(x_i,y_i).
\]
Multiplying over all coordinates gives
\[
    K_{\frac12}(x,y)
    \le
    \bigl(2(1-\rho)\bigr)^{-n}K_\rho(x,y).
\]
Since $g\ge0$, it follows that
\[
    T_{\frac12}g(x)
    \le
    \bigl(2(1-\rho)\bigr)^{-n}T_\rho g(x).
\]
Since also $f\ge0$, we get
\[
    \tup{f,T_{\frac12}g}
    \le
    \bigl(2(1-\rho)\bigr)^{-n}\tup{f,T_\rho g}.
\]
Using Lemma~\ref{lem:half-noise}, we conclude that
\[
    \|f*g\|_2^2
    \le
    \bigl(2(1-\rho)\bigr)^{-n}\tup{f,T_\rho g}.
\]
This proves $c_{\rho,n}\ge \bigl(2(1-\rho)\bigr)^n$ for $\frac12<\rho<1$.

It remains to prove that these constants are sharp.

First suppose $0\le \rho\le\frac12$. Taking $f\equiv 1$ and $g\equiv 1$,
we have $T_\rho g\equiv 1$ and $f*g\equiv 1$. Thus
\[
\tup{f,T_\rho g}=1=\|f*g\|_2^2.
\]
Hence no constant larger than $1$ can hold uniformly, and so $c_{\rho,n}=1$ for $0\le \rho\le\frac12$.

Now suppose $\frac12<\rho<1$. Let $f=\1_{\{0^n\}}$ and $g=\1_{\{1^n\}}$. Then $f*g=2^{-n}\1_{\{1^n\}}$ and hence $\|f*g\|_2^2=2^{-3n}$. On the other hand,
\[
    T_\rho g(0^n)
    =
    \left(\frac{1-\rho}{2}\right)^n.
\]
Therefore 
$$
\tup{f,T_\rho g}=2^{-n}T_\rho g(0^n)=2^{-n}\left(\frac{1-\rho}{2}\right)^n=2^{-2n}(1-\rho)^n.
$$
Consequently,
\[
\frac{\tup{f,T_\rho g}}{\|f*g\|_2^2}=\frac{2^{-2n}(1-\rho)^n}{2^{-3n}}=\bigl(2(1-\rho)\bigr)^n.
\]
Thus no constant larger than $\bigl(2(1-\rho)\bigr)^n$ can hold uniformly. Hence $ c_{\rho,n}=\bigl(2(1-\rho)\bigr)^n$ for $\frac12<\rho<1$.

Finally, for $\rho=1$, the same example gives $\tup{f,T_1g}=0$, while
$\|f*g\|_2^2=2^{-3n}>0$. Thus no positive constant $c$ can satisfy $\tup{f,T_1 g}\ge c\|f*g\|_2^2$ for all $f,g:\{0,1\}^n\to[0,1]$. 
\end{proof}

We now derive the spectral consequence stated in the introduction, namely Corollary~\ref{cor:integrated-noise-bound}.

\begin{proof}[Proof of Corollary~\ref{cor:integrated-noise-bound}]
For the one-point cube we use the convention $c_{\rho,0}=1$ for $0\le\rho\le1$. Indeed, if $u,v\in[0,1]$ are constants, then
\[
\tup{u,T_\rho v}=uv\ge u^2v^2=\|u*v\|_2^2.
\]
Fix $i\in[n]$. Since $f$ and $g$ are increasing Boolean functions, the derivatives $\partial_i f$ and $\partial_i g$ are $\cube$-valued functions on $\cube^{[n]\setminus\{i\}}$. If $n=1$, the preceding one-point estimate applies to $(\partial_i f,\partial_i g)$; if $n\ge2$, Theorem~\ref{thm:general-noise-comparison} applies in dimension $n-1$. Hence, in all cases,
\[
\int_0^1\tup{\partial_i f,T_\rho\partial_i g}\,d\rho
\ge \left(\int_0^1 c_{\rho,n-1}\,d\rho\right)
\|\partial_i f*\partial_i g\|_2^2.
\]
Using the explicit value of $c_{\rho,n-1}$, together with the convention $c_{\rho,0}=1$ when $n=1$,
\[
\int_0^1 c_{\rho,n-1}\,d\rho
=
\int_0^{\frac12}1\,d\rho+
\int_{\frac12}^1\bigl(2(1-\rho)\bigr)^{n-1}\,d\rho
=
\frac12+\frac1{2n}
=
\frac{n+1}{2n}.
\]
Hence combining Lemma~\ref{lem:semigroup-derivative-KMS} and~\eqref{eq:derivative-convolution-identity} gives
\[
\Cov(f,g)=\frac14\sum_{i=1}^n\int_0^1\tup{\partial_i f,T_\rho\partial_i g}\,d\rho\ge \frac{n+1}{2n}\cdot\frac14\sum_{i=1}^n\|\partial_i f*\partial_i g\|_2^2=4\cdot\frac{n+1}{2n}
\sum_{S\neq\emptyset}|S|\hat f(S)^2\hat g(S)^2.\tag*{\qedhere}
\]
\end{proof}

\section{Concluding Remarks}\label{sec:remarks}

We conclude by observing that the Chv\'atal correlation formulation cannot be obtained from the stronger ``dream relation'' proposed in~\cite[Section~2.3]{FKKK2018correlation}. Chv\'atal's conjecture asserts that for every decreasing family $\mathcal F\subseteq 2^{[n]}$, some largest intersecting subfamily of $\mathcal F$ is a star, i.e., has the form $\{A\in\mathcal F:\, i\in A\}$ for some $i\in[n]$. Friedgut, Kahn, Kalai, and Keller~\cite[Conjecture~1.2]{FKKK2018correlation} showed that this conjecture is equivalent to the following correlation formulation.

\begin{conjecture}[Friedgut--Kahn--Kalai--Keller~\cite{FKKK2018correlation}]
\label{conj:FKKK-antipodal}
For every increasing $f,g:\{0,1\}^n\to\{0,1\}$, if $g$ is antipodal, i.e., $g(x)=1-g(1-x)$ for any $x\in\cube^n$, then
\begin{equation}\label{ineq:FKKKineq}
\Cov(f,g)\ge \frac14 \min_{i\in[n]} \Inf_i[f].
\end{equation}
\end{conjecture}

The dream-relation approach from~\cite[Section~2.3]{FKKK2018correlation} would apply if one could prove a substantially stronger lower bound of the form
\begin{equation}\label{ineq:dream ineq}
\Cov(f,g)\ge C\,\Icorr(f,g)
\end{equation}
with a universal constant $C>0$. If such an inequality held for all increasing $f$ and increasing antipodal $g$, then Harper's edge-isoperimetric inequality applied to the balanced Boolean function $g$ would give $\sum_i\Inf_i[g]\ge1$, and hence
\[
\Cov(f,g)\ge C\min_i\Inf_i[f]\sum_{i=1}^n\Inf_i[g]\ge C\min_i\Inf_i[f].
\]
This would imply a Chv\'atal-type correlation inequality up to a universal constant. The following proposition shows that this particular route is impossible, namely a dimension-free lower bound of the form~\eqref{ineq:dream ineq} cannot work.

\begin{proposition}\label{prop:or-majority}
There is no universal constant $C>0$ such that
\[
\Cov(f,g)\ge C\,\Icorr(f,g)
\]
holds for all increasing Boolean functions $f,g:\cube^n\to\cube$, even if $g$ is antipodal.
\end{proposition}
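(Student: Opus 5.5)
The plan is to refute a universal constant by exhibiting an explicit family of pairs, one for each odd $n$, for which the ratio $\Cov(f,g)/\Icorr(f,g)$ tends to $0$. I would take $f$ extremely sparse and $g$ the majority function. The point is that $\Cov(f,g)$ is at most of order $\E[f]$, whereas $\Icorr(f,g)$ sums a contribution from each of the $n$ coordinates. Concretely, let $n$ be odd and set
\[
f(x)=\prod_{i=1}^n x_i \quad(\text{the AND function}),\qquad g(x)=\1\Bigl[\textstyle\sum_i x_i> n/2\Bigr]\quad(\text{majority}).
\]
Both functions are increasing. The function $g$ is antipodal because $\sum_i(1-x_i)>n/2$ holds exactly when $\sum_i x_i<n/2$, and for odd $n$ ties cannot occur. (Taking $f$ to be the dual OR-type function, or another sparse increasing function, would work the same way; AND gives the cleanest computation.)

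The next step is the covariance. The product $fg$ is nonzero only at $1^n$, and there $g(1^n)=1$, so $\E[fg]=2^{-n}$. We also have $\E[f]=2^{-n}$, and $\E[g]=1/2$ by antipodality. Hence
\[
\Cov(f,g)=2^{-n}-2^{-n}\cdot\tfrac12=2^{-n-1}.
\]

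Then I would compute the influences.
\begin{itemize}
\item For AND, coordinate $i$ is pivotal exactly when all other coordinates equal $1$, so $\Inf_i[f]=2^{-(n-1)}$.
\item For majority, coordinate $i$ is pivotal exactly when the remaining $n-1$ coordinates split evenly. Thus $\Inf_i[g]=\binom{n-1}{(n-1)/2}2^{-(n-1)}$, which by Stirling's formula (or the elementary bound $\binom{2m}{m}\ge 4^m/(2\sqrt m)$) is at least $c/\sqrt n$ for an absolute constant $c>0$.
\end{itemize}
Summing over the $n$ coordinates gives
\[
\Icorr(f,g)=n\cdot 2^{-(n-1)}\cdot\binom{n-1}{(n-1)/2}2^{-(n-1)}\ \ge\ c'\sqrt n\,2^{-n}.
\]

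Finally, combining the two estimates yields $\Cov(f,g)/\Icorr(f,g)\le C''/\sqrt n\to 0$ as $n\to\infty$ through odd values. Therefore no fixed $C>0$ can satisfy $\Cov(f,g)\ge C\,\Icorr(f,g)$ for all such pairs, even with $g$ antipodal. This matches the asymptotics $\Cov\asymp 2^{-n}$ and $\Icorr\asymp\sqrt n\,2^{-n}$ announced in Remark~\ref{rem:FKKK-motivation}.

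I expect no serious obstacle. The only non-trivial ingredient is the central binomial lower bound $\binom{2m}{m}4^{-m}\gtrsim m^{-1/2}$, which I would state with an explicit elementary inequality so that the argument avoids asymptotic notation.
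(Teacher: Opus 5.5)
Your proof is correct and is essentially the paper's argument. The paper pairs majority with OR, $f=\1_{\{\sum_j x_j\ge 1\}}$, while you pair it with AND, its dual under $x\mapsto\mathbf{1}-x$. Since majority is self-dual and this duality preserves $\Cov(f,g)$ and every influence, both choices give $\Cov(f,g)=2^{-n-1}$ and $\Icorr(f,g)\asymp\sqrt n\,2^{-n}$. Your explicit bound $\binom{2m}{m}4^{-m}\ge 1/(2\sqrt m)$ is a fine substitute for the paper's appeal to Stirling.
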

\begin{proof}
Let $n$ be odd and define
\[
f:=\mathbbm{1}_{\{\sum_{j=1}^n x_j\ge 1\}},\quad g:=\mathbbm{1}_{\{\sum_{j=1}^n x_j\ge \frac{n+1}{2}\}}.
\]
Then $f$ and $g$ are increasing. Moreover, $g$ is antipodal. Indeed, for every $x\in\{0,1\}^n$, we have $\sum_{i=1}^n(1-x_i)=n-\sum_{i=1}^n x_i$, and since $n$ is odd, exactly one of $\sum_i x_i$ and $\sum_i(1-x_i)$ is at least $(n+1)/2$.

We now compute the covariance. Since $g=1$ implies $f=1$, we have $fg=g$. Also, $\E[f]=1-2^{-n}$ and $\E[g]=1/2$. Therefore
\[
\Cov(f,g)=\E[g]-\E[f]\E[g]
=
\frac12-\left(1-2^{-n}\right)\frac12
=
2^{-n-1}.
\]

We next compute the influences. For $f$, a coordinate $i$ is pivotal exactly when all the other $n-1$ coordinates are equal to $0$. Thus $\Inf_i[f]=2^{-(n-1)}$. For $g$, a coordinate $i$ is pivotal exactly when the other $n-1$ coordinates contain precisely $(n-1)/2$ ones. Hence
\[
\Inf_i[g]
=
\Prb\left(\sum_{j\neq i}x_j=\frac{n-1}{2}\right)
=
\frac{\binom{n-1}{(n-1)/2}}{2^{n-1}}.
\]
It follows that
\[
\Icorr(f,g)
=
\sum_{i=1}^n \Inf_i[f]\Inf_i[g]
=
 n\cdot \frac{1}{2^{n-1}}\cdot \frac{\binom{n-1}{(n-1)/2}}{2^{n-1}}
\asymp
\frac{\sqrt n}{2^n},
\]
by Stirling's formula. Consequently,
\[
\frac{\Cov(f,g)}{\Icorr(f,g)}
\asymp
\frac{2^{-n}}{2^{-n}\sqrt n}
=
\frac1{\sqrt n}
\longrightarrow 0
\]
as $n\to\infty$ through odd integers. Therefore no universal constant $C>0$ can make the dream relation valid in this antipodal setting. 
\end{proof}
This obstruction does not contradict Conjecture~\ref{conj:FKKK-antipodal}. In fact the present example attains equality there, since
\[
\Cov(f,g)=2^{-n-1}=\frac14\min_i\Inf_i[f].
\]

\bibliographystyle{plain}
\bibliography{reference}
\end{document}